\newtheorem{thm}{Theorem}[section]
\newtheorem{lemma}[thm]{Lemma}
\newtheorem{prop}[thm]{Proposition}
\newtheorem{cor}[thm]{Corollary}
\newtheorem{question}[thm]{Question}
\newcommand{\beq}[1]{\begin{equation}\label{#1}}
\newcommand{\enq}[0]{\end{equation}}
\newcommand{\bn}[0]{\bigskip\noindent}
\newcommand{\mn}[0]{\medskip\noindent}
\newcommand{\nin}[0]{\noindent}
\newcommand{\sub}[0]{\subseteq}
\newcommand{\sm}[0]{\setminus}
\renewcommand{\dots}[0]{,\ldots,}
\newcommand{\A}[0]{{\cal A}}
\newcommand{\B}[0]{{\cal B}}
\newcommand{\f}[0]{{\cal F}}
\newcommand{\h}[0]{{\cal H}}
\newcommand{\K}[0]{{\cal K}}
\newcommand{\m}[0]{{\cal M}}
\newcommand{\Q}[0]{{\cal Q}}
\newcommand{\R}[0]{{\cal R}}
\newcommand{\T}[0]{{\cal T}}
\newcommand{\rrr}[0]{{R}}
\newcommand{\rr}[0]{{_\rrr}}
\newcommand{\ra}[0]{\rightarrow}
\newcommand{\TT}[0]{{\bf T}}
\newcommand{\UU}[0]{{\bf U}}
\newcommand{\XX}[0]{{\bf X}}
\newcommand{\aaa}[0]{\mbox{{\sf a}}}
\newcommand{\bbb}[0]{\mbox{{\sf b}}}
\newcommand{\ccc}[0]{\mbox{{\sf c}}}
\newcommand{\ddd}[0]{\mbox{{\sf d}}}
\newcommand{\ttt}[0]{t}
\newcommand{\0}[0]{\emptyset}
\renewcommand{\qed}[0]{\begin{flushright} \rule{2mm}{3mm} \end{flushright}}
\newcommand{\C}[2]{{{#1}\choose{{#2}}}}
\newcommand{\Cc}[0]{\tbinom}
\newcommand{\ga}[0]{\alpha }
\newcommand{\gb}[0]{\beta }
\newcommand{\gd}[0]{\delta }
\newcommand{\gD}[0]{\Delta }
\newcommand{\gG}[0]{\Gamma }
\newcommand{\gl}[0]{\lambda }
\newcommand{\gL}[0]{\Lambda}
\newcommand{\gO}[0]{\Omega}
\newcommand{\gS}[0]{\Sigma}
\newcommand{\gz}[0]{\zeta}
\newcommand{\eps}[0]{\varepsilon }
\newcommand{\vt}[0]{\vartheta}
\newcommand{\vp}[0]{\varphi}
\newcommand{\his}[0]{\h_i^\star}
\newcommand{\ais}[0]{A_i^\star}
\newcommand{\sugg}[1]{}
\newcommand{\comments}[1]{}
\begin{document}

\renewcommand{\thefootnote}{\fnsymbol{footnote}}
\footnotetext{AMS 2010 subject classification:  05D40, 05D05, 05C65}
\footnotetext{Key words and phrases:  random hypergraph,
Erd\H{o}s-Ko-Rado property, Sperner's Theorem}

\title{
On Erd\H{o}s-Ko-Rado for random hypergraphs II\footnotemark}
\author{A. Hamm and J. Kahn}
\date{}
\footnotetext{ $^*$Supported by NSF grant DMS1201337}

\maketitle

\begin{abstract}
Denote by $\h_k(n,p)$ the random $k$-graph
in which each $k$-subset of $\{1\dots n\}$ is present
with probability $p$, independent of other choices.
More or less answering a question of Balogh, Bohman and Mubayi,
we show:  there is a fixed $\eps>0$ such that
if $n=2k+1$ and $p> 1-\eps$, then w.h.p. (that is, with probability tending to 1
as $k\ra \infty$), $\h_k(n,p)$ has the ``Erd\H{o}s-Ko-Rado property."
We also mention a similar random version of Sperner's Theorem.

\end{abstract}

\section{Introduction}\label{Intro}

One of the most interesting
combinatorial trends of the last couple decades
has been the investigation of ``sparse random" versions of
some of the
classical theorems of the subject---that is, of
the extent to which
such results hold in a random setting.
This issue has been the subject some
spectacular successes, particularly those related to the
theorems of Ramsey \cite{Ramsey}, Tur\'an \cite{Turan} and Szemer\'edi \cite{Sz};
see \cite{FR86,BSS,RR, KLR}
for origins and, e.g., \cite{Conlon-Gowers, Schacht,DK,BMS,ST}
(or the survey \cite{Rodl-Schacht})
for a few of the more recent developments.

Here we are interested in the analogous question
for the Erd\H{o}s-Ko-Rado
Theorem \cite{EKR},
another cornerstone of extremal combinatorics.
This natural problem has already been considered by
Balogh, Bohman and Mubayi \cite{BBM}, and we first
quickly recall a few of the notions from that paper.

In what follows $k$ and $n$ are always positive integers with
$n>2k$.  As usual we write $[n]$ for $\{1\dots n\}$ and
$\C{V}{k}$ for the collection of $k$-subsets of a set $V$.
A $k$-{\em graph} (or $k$-{\em uniform hypergraph}) on $V$
is a multisubset, say $\h$, of $\C{V}{k}$.
Members of $V$ and $\h$ are called {\em vertices} and {\em edges} respectively.
We use $\h_x$ for the set of edges containing $x$ ($\in V$),
called the {\em star} of $x$ in $\h$
or the {\em principal} subhypergraph generated by $x$.
For the present discussion we take $V=[n]$ and write $\K$ for $\C{V}{k}$.

A collection of sets is {\em intersecting}, or a {\em clique},
if no two
of its members are disjoint.
The Erd\H{o}s-Ko-Rado Theorem says that for any $n$ and $k$
as above, the maximum size of an intersecting $k$-graph on $V$
is $\C{n-1}{k-1}$ and, moreover, this bound is achieved only
by the stars.
%

Following \cite{BBM} we say
$\h$ satisfies {\em (strong) EKR} if
every largest clique of $\h$ is a star;
thus the EKR Theorem says $\C{V}{k}$ satisfies EKR.
(We also say, again as in \cite{BBM}, that
$\h$ satisfies
{\em weak EKR} if {\em some} largest clique
is a star, but this slightly weaker notion
will not concern us here.)

For the rest of this introduction we use $\h=\h_k(n,p)$ for the random $k$-graph
on $V$ in which members of $\C{V}{k}$ are present independently,
each with probability $p$.
As suggested above, we are interested in understanding when
EKR holds for $\h$; a little more formally:

\begin{question}
For what $p_0=p_0(n,k)$
is it true that $\h$ satisfies EKR w.h.p. provided $p> p_0$?
\end{question}
\nin
(As usual ``w.h.p." ({\em with high probability}) means with
probability tending to one as $n\ra\infty$.)

\medskip
The nature of the problem may be said to change around
$k=\sqrt{n}$, since for $k$ smaller than this, two random
$k$-sets are typically disjoint, while the opposite is
true for larger $k$.
Heuristically we may say that the problem becomes more interesting/challenging
as $k$ grows and the potential
violations of EKR proliferate (though increasing $k$ does narrow the range of $p$ for which
we {\em expect} EKR to hold).

In this paper we are interested in what happens when $k$ is as
large as possible.  The next assertion is our main result.

\begin{thm}\label{MT}
There is a fixed $\eps>0$ such that if $n=2k+1$ and $p>1-\eps$, then
$\h $ satisfies EKR w.h.p.
\end{thm}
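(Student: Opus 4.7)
My plan is to argue by contradiction, using the Hilton--Milner theorem together with concentration bounds on the ``missing edges'' of $\h$. Put $q := 1-p$ and $\bar\h := \K \sm \h$, so the indicators $\mathbf{1}\{A \in \bar\h\}$ ($A \in \K$) are i.i.d.\ Bernoulli$(q)$. Suppose EKR fails, meaning there is a non-star intersecting family $\f \subseteq \h$ (equivalently $\bigcap \f = \emptyset$, i.e.\ $\f$ is not contained in any $\K_y$) with $|\f| \ge \max_y |\h_y|$. Extending $\f$ to a maximal non-star intersecting family $\f^* \supseteq \f$ in $\K$ (possible since $\bigcap\f^*\subseteq\bigcap\f=\emptyset$), the Hilton--Milner theorem gives $|\f^*| \le \binom{2k}{k-1} - (k-1)$. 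Setting $d := \binom{2k}{k-1} - |\f^*| \ge k-1$, the requirement $|\f^* \cap \h| \ge |\h_y|$ (which must hold for every $y \in [n]$) rewrites as
\begin{equation}
|\bar\h_y| - |\f^* \cap \bar\h| \ge d \qquad \text{for every } y \in [n]. \label{eq:plan-bad}
\end{equation}

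The critical case is when $\f^*$ is a Hilton--Milner family $\{A : x \in A,\, A \cap B \ne \emptyset\} \cup \{B\}$ for some $x \in [n]$ and $k$-set $B \not\ni x$, so $d = k-1$ and the symmetric difference $\K_x \triangle \f^*$ consists of only $k+1$ edges: the set $B$ together with the $k$ sets $A_1,\dots,A_k$ of the form $\{x\} \cup C$ as $C$ ranges over the $(k-1)$-subsets of $[n] \sm (\{x\} \cup B)$. Specializing \eqref{eq:plan-bad} to $y = x$ reduces to
\[
\sum_{i=1}^k \mathbf{1}\{A_i \in \bar\h\} \;-\; \mathbf{1}\{B \in \bar\h\} \;\ge\; k-1,
\]
a very restrictive event (essentially forcing all but possibly one of the $A_i$ to be in $\bar\h$), whose probability a short computation bounds by $(k+1) q^{k-1}$. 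Since the number of Hilton--Milner families is at most $n \binom{n-1}{k} = O(k \cdot 4^k)$, the union bound gives aggregate failure probability $O(k^2 (4q)^{k-1})$, which vanishes as $k \to \infty$ provided $4q < 1$---this is the source of the $\eps$ in the theorem.

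For all other maximal non-star intersecting $\f^*$, either $|\f^*|$ is strictly below the Hilton--Milner bound (so $d > k-1$ and \eqref{eq:plan-bad} demands a larger deviation) or $\f^*$ is of a different ``near-extremal'' type---e.g.\ a Frankl-type family $\{A : |A \cap T| \ge r\}$ for small $T$. For each such $\f^*$, Bernstein's inequality applied to \eqref{eq:plan-bad} at an appropriate $y$ (say any $y \in T$ in the Frankl case) yields a per-family tail bound exponentially small in $d^2/(q\,|\K_y \triangle \f^*|)$, which easily dominates the polynomial count of such families.

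The main obstacle is this last step: uniformly handling all maximal non-star intersecting families in $\K$ whose size is close to the Hilton--Milner bound. The Hilton--Milner case itself is clean because of the very small symmetric difference $|\K_x \triangle \f^*| = k+1$ and an explicit combinatorial description of the critical event; but the ``just-below-extremal'' families have less rigid structure, so a uniform bound requires invoking (or developing) a Frankl-type stability classification and carefully pairing it with the probabilistic estimates above.
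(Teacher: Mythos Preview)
Your proposal has a genuine gap, and in fact your intuition about where the difficulty lies is reversed.

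The Hilton--Milner case you treat carefully is the \emph{easy} end of the problem. As the paper itself remarks, ``the main difficulties in proving the theorem involve cliques that are far from stars.'' Concretely, write $A=\f^*\sm\K_x$ for a best $x$; the hard regime is $|A|$ of order $M=\binom{2k}{k-1}$ (say $|A|\sim M/2$), where the isoperimetric gap $\gd=|G_A|/|A|-1$ is only of order $1/k$ (this is forced by Kruskal--Katona). In this regime $d=M-|\f^*|$ is of order $M/k$, while $|\K_x\triangle\f^*|$ is of order $M$, so Bernstein at $y=x$ gives a per-family bound of shape $\exp[-\Theta(d^2/(q\,|\K_x\triangle\f^*|))]=\exp[-\Theta(M/(qk^2))]$. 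That bound is fine in isolation; the problem is the count you have to beat.

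Your claim of a ``polynomial count of such families'' is false. Maximal non-star intersecting families in $\binom{[2k+1]}{k}$ are in bijection with nonempty closed subsets $A\sub\gG_k^x$ (for each $x$), and for $|A|$ in the range above there are $\exp[\Theta(\gd a)]=\exp[\Theta(M/k)]$ of them---this is already enormously more than polynomial, and a priori one cannot rule out far larger counts such as $\binom{N}{a}$. A naive union bound therefore fails: the entropy of the family of possible $\f^*$'s is comparable to (or larger than) the exponent in your tail bound. This is exactly the obstacle the paper overcomes, and it does so by a Sapozhenko-style container argument: one approximates each $A$ by a short certificate $(T_A,F_A,U_A)$, shows the number of certificates is only $\exp[o(\gd a)]$, proves concentration for $|\XX\cap B|$ over the (few) ``approximating'' sets $B$, and then shows $|\A(a,g)|\le\exp[O(\gd a)]$ so that the residual union bound goes through. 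None of this machinery is replaceable by a stability classification plus Bernstein; the ``just-below-extremal'' families you flag as the obstacle are in fact the easy ones, and the genuinely hard families have no useful structural description short of the container/approximation scheme.
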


\nin
This was prompted by Question 1.4 of \cite{BBM}, {\em viz.}
\begin{question}
Is it true that for $k\in (n/2-\sqrt{n},n/2)$ and $p=.99$,
EKR (or weak EKR) holds w.h.p. for $\h$?
\end{question}

\nin
Note that for $n,k$ as in Theorem~\ref{MT},
EKR is unlikely unless
$p$ is large
(so ``sparse random" is something of a misnomer),
since a simple calculation shows that
for $p$ less than
about $3/4$ stars are unlikely even to be maxi{\em mal} cliques.
(This is, of course, reminiscent of the Hilton-Milner Theorem \cite{H-M}, which says that
(for any $k$ and $n>2k$) the largest nontrivial cliques in $\C{[n]}{k}$ are those
of the form $\{A\}\cup\{B\in\C{[n]}{k}:x\in B, B\cap A\neq \0\}$
(with $A\in \C{[n]}{k}$ and $x\in [n]\sm A$).)
We expect that, for $k,n$ as in Theorem~\ref{MT},
this is in fact the main hurdle---that is,
EKR becomes likely as soon as stars are likely to be maximal---but we
are far from proving such a statement.
On the other hand, as will appear below, the main difficulties in proving the theorem
involve cliques that are far from stars.

We haven't thought very hard about whether the
$\eps$ in Theorem~\ref{MT} could be pushed
to .01, since this seems somewhat beside the point
(and since it seems not wildly unethical to
regard ``$.99$" as really meaning ``$1-\eps$ for some fixed $\eps>0$").
We assume our methods could be adapted to give Theorem~\ref{MT}
for smaller $k$,
but confine ourselves to the present statement.  This is
partly for simplicity, but also because we don't believe the theorem gives a very
satisfactory answer in other cases; e.g. even for $n=2k+2$ we expect EKR
to hold for $p$ down to about $1/k$.

\medskip
The original paper of Balogh {\em et al.} dealt mostly with $k< n^{1/2-\eps}$
(for a fixed $\eps>0$).
In a companion paper \cite{HK2} we precisely settle the
question for $k$ up to about $\sqrt{(1/4)n\log n}$ and suggest a possible general answer.

\medskip
The rest of this paper is organized as follows. Section~\ref{Preliminaries} sets notation and fills in some
mostly standard background, and Section~\ref{Setting} reduces Theorem~\ref{MT}
to a related, slightly fussier statement.
The most interesting part of the argument, given in Section~\ref{Proof},
proves the latter statement using, in addition to standard large deviation considerations,
asymptotic-enumerative ideas inspired especially by work of A.A. Sapozhenko \cite{Sap}.
A final short section mentions a counterpart of Theorem~\ref{MT} for
Sperner's Theorem
that follows easily from the method developed in Section~\ref{Proof}.

\section{Preliminaries}\label{Preliminaries}

{\em Usage}

Set $M=\C{2k}{k-1}$ and $N=\C{2k}{k}$.
Unless specified otherwise, we use $\K$ for $\C{[n]}{k}$.
As usual, $2^S$ is the power set of $S$
and, for a hypergraph $\h$,
$d_\h(x)$ is the degree of $x\in V$ in $\h$ (i.e. $|\{A\in \h:x\in A\}|$)
and $ \gD_{\h} $ is the maximum of these degrees.

For graphs, $xy$ is an edge joining vertices $x$ and $y$; $N(x)$ is, as usual, the
neighborhood of $x$ (and $N(X)=\cup_{x\in X}N(x)$);
$\nabla(X,Y)$ is the set of edges joining the disjoint vertex sets $X,Y$;
and $d_W(x)=|N(x)\cap W|$ (for $W\sub V$).

We use $B(m,\ga)$ for a random variable
with the binomial distribution ${\rm Bin}(m,\ga)$ and
$\log $ for $\ln$.
We assume throughout that $n=2k+1$ is large enough to support our arguments.

\bn
{\em Large deviations}

We use
Chernoff's inequality in the following form, which
may be found, for example, in
\cite[Theorem 2.1]{JLR}.
\begin{thm}\label{Chern}
For $\xi =B(m,q)$, $\mu=mq$ and any $\gl\geq 0$,
\begin{eqnarray*}
\mathbb{P}(\xi >\mu+\gl)& < &\exp [- \tfrac{\gl^2}{2(\mu+\gl/3)}],\\
\mathbb{P}(\xi < \mu-\gl) &<& \exp [- \tfrac{\gl^2}{2\mu}].
\end{eqnarray*}
\end{thm}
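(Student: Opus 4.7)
\medskip
\noindent
\textbf{Proof plan for Theorem~\ref{Chern}.}
The plan is to apply the classical exponential-moment (Cram\'er--Chernoff) method. Writing $\xi = \sum_{i=1}^m X_i$ with $X_i$ i.i.d.\ Bernoulli$(q)$, the key preliminary is the moment generating function bound
\[
\E[e^{t\xi}] = (1 - q + qe^t)^m \leq \exp\bigl(mq(e^t - 1)\bigr) = \exp\bigl(\mu(e^t - 1)\bigr),
\]
valid for every $t \in \Rr$, which follows from independence together with the elementary inequality $1 + x \leq e^x$. Everything else is tail-bound bookkeeping.

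For the upper tail I would fix $t > 0$ and apply Markov's inequality to $e^{t\xi}$, giving
\[
\Pr(\xi > \mu + \gl) \leq e^{-t(\mu+\gl)}\E[e^{t\xi}] \leq \exp\bigl(\mu(e^t-1) - t(\mu + \gl)\bigr).
\]
The exponent is minimized at $t^* = \log(1 + \gl/\mu)$, and plugging back gives rate $-\mu\, h(\gl/\mu)$, where $h(y) := (1+y)\log(1+y) - y$. To recover the stated form, I would then invoke the standard analytic inequality $h(y) \geq y^2/[2(1 + y/3)]$ valid for all $y \geq 0$, which after substituting $y = \gl/\mu$ yields exactly $\exp(-\gl^2/[2(\mu + \gl/3)])$.

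For the lower tail I would run the same argument with $t < 0$: Markov applied to $e^{t\xi}$, now an exponentially \emph{decreasing} function of $\xi$, produces $\Pr(\xi < \mu - \gl) \leq \exp(-\mu\, h_-(\gl/\mu))$ with $h_-(y) := (1-y)\log(1-y) + y$ on $y \in [0,1]$. The simpler bound $h_-(y) \geq y^2/2$ on that interval then yields the stated $\exp(-\gl^2/(2\mu))$.

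The probabilistic content is entirely contained in the one-line MGF bound; the \emph{main obstacle}, such as it is, is the analytic verification of the two inequalities $h(y) \geq y^2/[2(1 + y/3)]$ and $h_-(y) \geq y^2/2$. Both are standard and can be established either by comparing Taylor coefficients at $y = 0$ or by checking that the second derivative of the respective difference is nonnegative on the relevant interval. The net effect is that the proof reduces to one MGF computation plus two elementary calculus exercises.
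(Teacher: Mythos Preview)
Your proof plan is correct and is exactly the standard Cram\'er--Chernoff argument. Note, however, that the paper does not actually prove Theorem~\ref{Chern}: it is quoted as a known result with the citation ``may be found, for example, in \cite[Theorem 2.1]{JLR},'' so there is no paper proof to compare against. What you have sketched is essentially the proof one finds in that reference (or in Appendix~A of \cite{AS}), so in that sense your approach coincides with the intended one.
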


\nin
We will also need the following improvement
for larger deviations, for which see e.g. \cite[Theorem A.1.12]{AS}.
\begin{thm}\label{uppertail}
For $\xi =B(m,q)$ and any $K$,
\begin{eqnarray*}
\mathbb{P}(\xi > Kmq)< \exp[-Kmq\log (K/e)].
\end{eqnarray*}
\end{thm}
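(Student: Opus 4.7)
The plan is to run the standard Chernoff exponential-moment argument, specialized and then optimized so as to recover the stated upper-tail form. Throughout I may assume $K > 1$: otherwise $\log(K/e) \le 0$, the right-hand side of the claimed bound is at least $1$, and there is nothing to prove.

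First I would recall the bound on the moment generating function of $\xi = B(m,q)$. For any $t \ge 0$, writing $\xi$ as a sum of $m$ independent Bernoulli$(q)$'s and using $1+x \le e^x$ termwise,
\[
\E\bigl[e^{t\xi}\bigr] = \bigl(1-q+qe^t\bigr)^m = \bigl(1+q(e^t-1)\bigr)^m \le \exp\!\bigl(mq(e^t-1)\bigr).
\]
Markov's inequality applied to $e^{t\xi}$ then gives, for any $t\ge 0$,
\[
\Pr(\xi > Kmq) \;=\; \Pr\!\bigl(e^{t\xi} > e^{tKmq}\bigr) \;\le\; \exp\!\bigl(mq(e^t-1) - tKmq\bigr).
\]

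Next I would optimize the free parameter. Differentiating $mq(e^t-1)-tKmq$ in $t$ and setting the derivative to zero yields $e^t = K$, i.e.\ $t = \log K$, which is nonnegative because $K>1$. Substituting back,
\[
\Pr(\xi > Kmq) \;\le\; \exp\!\bigl(mq(K-1) - (\log K)\,Kmq\bigr) \;=\; \exp\!\bigl(-Kmq(\log K - 1) - mq\bigr).
\]
Since $\log K - 1 = \log(K/e)$ and $-mq \le 0$, this is at most $\exp\!\bigl(-Kmq\log(K/e)\bigr)$, as required.

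There is no real obstacle here; this is a classical calculation and the only small point worth flagging is the trivial range $K\le 1$ (or, more tightly, $K\le e$, where the claimed bound exceeds $1$). The argument otherwise proceeds entirely by the Cram\'er--Chernoff recipe: exponential Markov, the elementary estimate $1+x\le e^x$ for the MGF, and optimization at $t=\log K$. I would not attempt a sharper form, as the $-mq$ slack discarded at the end is harmless for all intended applications.
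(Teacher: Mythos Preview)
Your argument is correct: this is the standard Cram\'er--Chernoff computation, and every step is valid. Note that the paper does not actually prove this statement --- it is quoted as a known large-deviation bound with a reference to \cite[Theorem~A.1.12]{AS} --- so there is no ``paper's own proof'' to compare against; what you have written is precisely the textbook derivation underlying that citation.
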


\nin
(Of course this is only meaningful if $K>e$.)

\bn
{\em Isoperimetry and degree}

For $A\sub \C{[2k]}{k}$ let
$
\gd(A)= (|\partial_u A|-|A|)/|A|,
$
where $\partial_u A =\{y\in \C{[2k]}{k+1}:\exists x\in A, y\supset x\}$
(the {\em upper shadow} of $A$).
We will use the following consequence of
the Kruskal-Katona Theorem
(\cite{Kruskal}, \cite{Katona} or e.g. \cite{Bollobas}).

\begin{prop}\label{KKProp}
For $A\sub \C{[2k]}{k}$ with $|A|\leq N/2$,
\beq{delta1}
\gd(A) \geq \tfrac{\log 2}{k}~\log_2\left(\tfrac{N}{2|A|}\right).
\enq
\end{prop}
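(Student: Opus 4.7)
The plan is to pass from the upper shadow to a lower shadow by complementation, apply the Kruskal--Katona theorem in its Lov\'asz (real-valued) form, and then convert the resulting binomial ratio to the stated logarithmic bound by chaining two elementary inequalities. Observe first that the map $S \mapsto [2k]\sm S$ gives a bijection of $\binom{[2k]}{k}$ with itself, sending $A$ to $\ov A := \{[2k]\sm S: S \in A\}$ and setting up a parallel bijection $\nabla A \leftrightarrow \partial \ov A$ between the upper shadow of $A$ and the lower shadow $\partial \ov A \sub \binom{[2k]}{k-1}$ of $\ov A$; in particular $|\ov A| = |A|$ and $|\nabla A| = |\partial \ov A|$.

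Since $|\ov A| = |A| \leq N/2 = \binom{2k-1}{k}$, one can pick the unique real $x \in [k, 2k-1]$ with $|\ov A| = \binom{x}{k}$, and the Lov\'asz form of Kruskal--Katona then gives $|\partial \ov A| \geq \binom{x}{k-1}$. Writing $s := 2k-1-x \in [0,k-1]$, this yields
$$
\gd(A) \geq \frac{\binom{x}{k-1}}{\binom{x}{k}} - 1 = \frac{k}{x-k+1} - 1 = \frac{s}{k-s}.
$$

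To convert $s/(k-s)$ into the stated bound, I would use two elementary facts. First, with $v := k/(k-s) \geq 1$, the standard inequality $\ln v \leq v - 1$ yields $s/(k-s) \geq \ln(k/(k-s))$. Second, a straightforward termwise comparison, resting on the identity
$$
(k-s)(2k-1-i) - k(2k-1-s-i) = -s(k-1-i) \leq 0 \quad (i = 0,1,\ldots,k-1),
$$
gives $(k-s)/k \leq (2k-1-s-i)/(2k-1-i)$ for each such $i$, so that
$$
\left(\frac{k-s}{k}\right)^k \leq \prod_{i=0}^{k-1} \frac{2k-1-s-i}{2k-1-i} = \frac{\binom{2k-1-s}{k}}{\binom{2k-1}{k}} = \frac{|A|}{N/2}.
$$
Taking logarithms gives $k \ln(k/(k-s)) \geq \ln(N/(2|A|))$, and combining with the first inequality,
$$
\gd(A) \geq \ln\!\left(\frac{k}{k-s}\right) \geq \frac{1}{k}\ln\!\left(\frac{N}{2|A|}\right) = \frac{\log 2}{k}\log_2\!\left(\frac{N}{2|A|}\right),
$$
as required.

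No step here looks like a genuine obstacle: complementation and Lov\'asz--Kruskal--Katona are standard, and the two inequalities in the final chain are each one-line calculations, the termwise comparison being the (mildly) less routine of the two. The only minor subtlety is that $s$ need not be an integer, but the Lov\'asz formulation of Kruskal--Katona handles this cleanly, and the termwise inequality is manifestly valid for all real $s \in [0, k-1]$.
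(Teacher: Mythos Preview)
Your proof is correct and follows essentially the same route as the paper: both invoke Lov\'asz's form of Kruskal--Katona (via complementation in $[2k]$) to obtain $\gd(A)\geq s/(k-s)$---which in the paper's parametrization $t=s+1$ is $(t-1)/(k-t+1)$---and then reduce this to the logarithmic bound. The only difference is in the finishing: the paper sets $f(t)$ equal to the gap and verifies $f(1)=0$, $f'(t)\geq 0$, whereas you chain $v-1\geq\ln v$ with a termwise product comparison; both are routine, and yours is arguably a bit cleaner.
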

\nin
(Recall $N=\C{2k}{k}$, and notice that $N/2=\C{2k-1}{k}$.
The $\log 2$ in \eqref{delta1} can probably be replaced by 1, but cannot be replaced by $k/(k-1)$.)

\mn
{\em Proof.}
We use Lov\'asz' version \cite[Problem 13.31]{Lovasz}
of Kruskal-Katona, which in the present situation says that if $|A| = \C{x}{k}$
($:= (x)_k/k!$ for any $x\in \mathbb{R}$), then
$|\partial_u(A)|\geq \C{x}{k-1}$.
(This is ordinarily stated for the {\em lower} shadow, which is equivalent here
since our universe is of size $2k$.)

Let $|A| = \C{2k-t}{k}$, noting that $|A|\leq N/2$ implies $t\geq 1$,
and $\psi = k^{-1}\log 2$.
Then $\frac{N}{2|A|} = \frac{(2k)_k}{2(2k-t)_k}$
and, from Kruskal-Katona (Lov\'asz),
\[
\gd(A)\geq \Cc{2k-t}{k-1}/\Cc{2k-t}{k}-1= \frac{t-1}{k-t+1}.
\]
Thus \eqref{delta1} will follow from
\[
f(t):=\frac{t-1}{k-t+1}-\psi\log_2\left[\frac{(2k)_k}{2(2k-t)_k}\right]\geq 0
~~~~~\mbox{for $t\geq 1$,}
\]
so (since $f(1)=0$) from
$f'(t)\geq 0~$.
But, recalling the value of $\psi$, we have
\[
f'(t) = \frac{k}{(k-t+1)^2}-\frac{1}{k}\sum_{i=0}^{k-1}\frac{1}{2k-t-i}
\geq \frac{k}{(k-t+1)^2}-\frac{1}{k-t+1} \geq 0.
\]\qed

\bigskip
The following result of P. Frankl \cite{Frankl} will also be helpful
in getting things started.
(We give the result for general $k$, $n$ and $i$, again writing $\K$ for
$\Cc{[n]}{k}$, but will
only use it with $n=2k+1$ and $i=3$.)
Given $k$ and $n>2k$, set, for each $i\in \{3\dots k+1\}$,
%
%
\[
\f_i= \{A\in \K: 1\in A, A\cap \{2\dots i\}\neq\0\}\cup
\{A\in \K: A\supseteq \{2\dots i\}\}.
\]

\begin{thm}[\cite{Frankl}]\label{TFrankl}
For any k, n and i as above, if $\f\sub \K$ is a clique with
$|\f|>|\f_i|$, then $\gD_{\f}> \gD_{\f_i}$.
\end{thm}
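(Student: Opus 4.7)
The plan is to prove the contrapositive: if $\f\sub\K$ is intersecting with $\gD_\f\leq \gD_{\f_i}$, then $|\f|\leq |\f_i|$. As a first step I would pin down $\gD_{\f_i}$ by direct inclusion--exclusion on the defining union: the degree at vertex $1$ is $d_{\f_i}(1)=\Cc{n-1}{k-1}-\Cc{n-i}{k-1}$, i.e.\ the number of $k$-sets through $1$ meeting $\{2\dots i\}$; the degrees at $j\in\{2\dots i\}$ work out (using Pascal) to the same value (or smaller) in the given range $3\leq i\leq k+1$, and degrees at $j>i$ are strictly smaller. So writing $D:=\gD_{\f_i}=\Cc{n-1}{k-1}-\Cc{n-i}{k-1}$, the hypothesis becomes $d_\f(v)\leq D$ for every $v$, and the goal is $|\f|\leq D+\Cc{n-i}{k-i+1}$.

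Next I would try to reduce to the case that $\f$ is left-shifted (compressed). Standard $(a,b)$-shifts with $a<b$ preserve intersectingness and $|\f|$, and $\f_i$ itself is shifted, so one hopes to compare shifted to shifted. The catch is that shifting can increase $d(a)$ and thereby raise $\gD_\f$, so the degree hypothesis is not automatically inherited. The trick I would try is to first apply only $(a,b)$-shifts with $2\leq a<b$, which do not touch $d(1)$, and observe that they cannot raise $\gD_\f$ beyond $d_\f(1)\leq D$, because after such shifts every vertex other than $1$ has degree at most $d_\f(1)$ by the monotonicity of degrees under shifting. After this partial compression one has a family $\f'$ that is shifted on $[n]\setminus\{1\}$ and still satisfies $\gD_{\f'}\leq D$.

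With $\f'$ in hand I would split $\f'=\f'_1\cup \f'^*$ where $\f'_1=\{A\in\f':1\in A\}$ and $\f'^*$ is the rest. Since $d_{\f'}(1)\leq D$, we get $|\f'_1|\leq D$, which exactly matches the contribution of edges through $1$ to $|\f_i|$. The remaining task is to bound $|\f'^*|\leq \Cc{n-i}{k-i+1}$. Here I would use that $\f'^*$ is intersecting on $[n]\sm\{1\}$ and, more importantly, every $A\in\f'^*$ must intersect every $(k-1)$-set $B\sm\{1\}$ with $B\in\f'_1$; combining this with shiftedness (so that if $A\in\f'^*$ misses some small $j$, the shifted ``witness'' forces a $k$-set in $\f'_1$ that $A$ fails to meet) one concludes that $A\supseteq \{2\dots i\}$ for every $A\in\f'^*$. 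Summing yields $|\f'|\leq D+\Cc{n-i}{k-i+1}=|\f_i|$.

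The main obstacle is the shifting reduction: shifting does not in general respect the max-degree bound, and one must verify carefully that the restricted compression above (only shifting within $[n]\sm\{1\}$) is strong enough to force the structural dichotomy used in the third step. The second subtlety is pinning down the precise threshold $j=i$ at which shifted non-star edges are forced to contain $\{2\dots i\}$; this requires an inductive/extremal argument comparing $|\f_j|$ for adjacent values of $j$ and using that among the $\f_j$ only one matches the given $D$.
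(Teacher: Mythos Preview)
The paper does not prove Theorem~\ref{TFrankl}; it simply quotes the result from Frankl's 1987 paper \cite{Frankl} and uses it as a black box (only the case $n=2k+1$, $i=3$ is needed). So there is no ``paper's own proof'' to compare against here.

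As for your sketch, the overall contrapositive strategy is the natural one, and your identification of $\gD_{\f_i}=\Cc{n-1}{k-1}-\Cc{n-i}{k-1}$ is correct. But the shifting reduction, which you yourself flag as the main obstacle, does not go through as written. Your claim that after performing only $(a,b)$-shifts with $2\leq a<b$ ``every vertex other than $1$ has degree at most $d_\f(1)$'' is not justified: such shifts leave $d(1)$ unchanged and eventually force $d(2)\geq d(3)\geq\cdots\geq d(n)$, but there is no monotonicity principle bounding $d(2)$ by $d(1)$, and in general $d(2)$ can grow past the original $\gD_\f$. (A star at $n$ already shows that repeated shifts toward small indices can pile degree onto vertex $2$ well beyond $d(1)$.) So the hypothesis $\gD_{\f'}\leq D$ is not inherited, and the later step bounding $|\f'_1|\leq D$ breaks down.

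The second gap you acknowledge is also real: even granting a shifted $\f'$ with the degree bound, the deduction that every $A\in\f'^*$ must contain $\{2\dots i\}$ for \emph{this particular} $i$ requires a genuine extremal argument tying the threshold to the value of $D$; your sketch gestures at this but does not supply it. Frankl's original proof handles both issues by a more delicate induction (on $n$ and on the degree bound) rather than by a one-shot shifting reduction; if you want to reconstruct a proof, that paper is the place to look.
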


\bn
{\em Graphs}

Two special graph-theoretic notions will be relevant in
what follows.  First, for a bigraph $\gS$
with bipartition $\gG_1\cup \gG_2$, the {\em closure} of
$X\sub \gG_i$ is $[X]=\{x:N(x)\sub N(X)\}$
(and $X $ is {\em closed} if it is equal to its closure).
Second, for a (general) graph
$\gS$
and positive integer $j$,
$W\sub V(\gS)$
is {\it $j$-linked} if for all $u,v\in W$ there
are $u=u_0,u_1,\dots ,u_l=v$
with $u_i\in W$ ($\forall i$) and $\rho(u_{i-1},u_i)\leq j$
for $i\in [l]$, where $\rho$ is graph-theoretic
distance.
We will eventually need the following observation from
\cite{Sap}.
\begin{prop}\label{Plink}
Let $\gS$ be a graph
and suppose $A$ and $T$ are subsets of $V(\gS)$ with
$T\sub N(A)$, $A\sub N(T)$ and $A$ $j$-linked.
Then $T$ is $(j+2)$-linked.
\end{prop}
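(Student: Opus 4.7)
The plan is to lift a $j$-linked chain in $A$ up to a $(j+2)$-linked chain in $T$ by using the fact that every vertex of $A$ has a neighbor in $T$ and vice versa. Let me outline the steps.

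First, I would fix two arbitrary vertices $t,t'\in T$ and aim to produce a sequence $t=s_0,s_1,\dots,s_l=t'$ in $T$ with $\rho(s_{i-1},s_i)\leq j+2$ for each $i\in[l]$. Using $T\sub N(A)$, pick $a,a'\in A$ with $a\in N(t)$ and $a'\in N(t')$. Since $A$ is $j$-linked, there exist $a=b_0,b_1,\dots,b_l=a'$ with $b_i\in A$ and $\rho(b_{i-1},b_i)\leq j$ for each $i$.

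Next, using $A\sub N(T)$, for each $1\leq i\leq l-1$ choose some $s_i\in T$ with $s_i\in N(b_i)$, and set $s_0=t$, $s_l=t'$ (valid since $t\in N(a)=N(b_0)$ and $t'\in N(a')=N(b_l)$). For any consecutive pair, the triangle inequality for graph distance gives
\[
\rho(s_{i-1},s_i)\leq \rho(s_{i-1},b_{i-1})+\rho(b_{i-1},b_i)+\rho(b_i,s_i)\leq 1+j+1=j+2,
\]
so the sequence $s_0,s_1,\dots,s_l$ witnesses that $t$ and $t'$ are $(j+2)$-linked within $T$.

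Since $t,t'\in T$ were arbitrary, $T$ is $(j+2)$-linked. The argument is entirely mechanical — there is no real obstacle — the only thing to be careful about is remembering that the linking condition only requires the intermediate vertices to lie in the set (here $T$) and that the bound $j+2$ comes from paying one edge on each end to hop from $A$ into $T$.
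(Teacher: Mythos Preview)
Your proof is correct and is essentially identical to the paper's own argument: pick neighbors in $A$ for the endpoints in $T$, run the $j$-linked chain through $A$, lift each intermediate $b_i$ back to a neighbor $s_i\in T$, and bound each step by $1+j+1$ via the triangle inequality. The only differences are notational.
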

\nin
{\em Proof.}  Given $u,v\in T$, choose
$x,y\in A$ with $x\sim u$, $y\sim v$, and then
$x=x_0\dots x_\ell=y$ with $x_i\in A$ and
$\rho(x_{i-1},x_i)\leq j$ ($i\in [\ell]$).
If we now let $u_0=u$, $u_\ell=v$ and
$x_i\sim u_i\in T$ for $i\in [\ell-1]$,
then
$\rho(u_{i-1},u_i)\leq 1 +\rho(x_{i-1},x_i)+1\leq j+2$
(for $i\in [\ell]$).  The proposition follows.\qed

\medskip
We also find some use for the following standard bound.
\begin{prop}\label{Knuth}
In any graph with all degrees at most $d$,
the number of trees of size $u$ rooted at
some specified vertex is at most $(ed)^{u-1}$.
\end{prop}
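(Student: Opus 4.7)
The plan is to prove this by a double-counting argument that reduces the bound to Cayley's formula plus Stirling.

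First I would let $\mathcal{S}$ denote the set of rooted $u$-vertex subtrees of $\gS$ containing the specified vertex $v$, and consider the auxiliary set $\mathcal{L}$ of pairs $(T,\phi)$ with $T\in\mathcal{S}$ and $\phi\colon V(T)\to[u]$ a bijection sending $v$ to $1$. On one hand $|\mathcal{L}|=(u-1)!\,|\mathcal{S}|$; on the other, each member of $\mathcal{L}$ is equivalently specified by (i) a labeled rooted tree on $[u]$ with root $1$ (for which Cayley's formula gives $u^{u-2}$ choices) together with (ii) an adjacency-preserving map of $[u]$ into $V(\gS)$ sending $1$ to $v$.

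Next I would bound (ii): processing the non-root labels in BFS order (with respect to the labeled tree), each new label's image must be a neighbor in $\gS$ of its parent's already-placed image, and there are at most $d$ such neighbors, so at most $d^{u-1}$ such maps in total. This gives $|\mathcal{L}|\leq u^{u-2}d^{u-1}$, and hence $|\mathcal{S}|\leq u^{u-1}d^{u-1}/u!$.

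Finally I would clean up via Stirling's inequality $u!\geq(u/e)^u$, which yields $u^{u-1}/u!\leq e^u/u\leq e^{u-1}$ for $u\geq 3$; the cases $u=1,2$ being trivial, this gives $|\mathcal{S}|\leq(ed)^{u-1}$. The whole argument is routine—conceptually the only ingredient beyond elementary counting is Cayley's formula, so there isn't really a hard step; the one minor point is keeping track of the correct factor of $e$ coming out of Stirling's bound.
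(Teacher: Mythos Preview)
Your argument is correct and takes a genuinely different route from the paper's.  The paper does not double-count via Cayley's formula; instead it simply quotes the Fuss--Catalan type identity (from Knuth) that the infinite $d$-branching rooted tree contains exactly
\[
\frac{1}{(d-1)u+1}\binom{du}{u}
\]
rooted subtrees of size $u$, and observes that this is at most $(ed)^{u-1}$.  Implicit in that one-liner is an injection (via a fixed ordering of neighbours) from rooted $u$-subtrees of $\gS$ into rooted $u$-subtrees of the infinite $d$-ary tree.

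Your approach trades the Fuss--Catalan formula for the more familiar Cayley formula plus Stirling, at the cost of a slightly weaker intermediate bound ($u^{u-1}d^{u-1}/u!$ versus $\frac{1}{(d-1)u+1}\binom{du}{u}$; e.g.\ for $u=3$, $d=2$ these give $6$ and $5$).  The gain is that the proof is entirely self-contained---no external enumeration fact beyond Cayley---whereas the paper's version is shorter but relies on a formula many readers would have to look up.  One small wording point: ``equivalently specified'' slightly overstates things, since not every pair $(L,\psi)$ arises (only those with $\psi$ injective do), but since you are only using the injection $\mathcal{L}\hookrightarrow\{(L,\psi)\}$ for an upper bound this does no harm.
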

\nin
{\em Proof.}
This follows easily from
the fact (see e.g. \cite[p.396, Ex.11]{Kn}) that
the infinite $d$-branching rooted tree contains precisely
$\frac{1}{(d-1)u+1}{ du \choose u}\leq (ed)^{u-1}$
rooted subtrees of size $u$.\qed

\bn
{\em Etc.}

We make repeated use of the fact that for positive integers $a,b$
with $a\leq b/2$,
\beq{Cnt}
\sum_{i\leq a}\Cc{b}{i}\leq \exp[a\log (eb/a)].
\enq

\section{Setting up}\label{Setting}

\mn
In what follows,
$\h$ denotes a member of $\m$,
the collection of {\em nonprincipal}
maximal intersecting families
in $\C{[n]}{k}$.
We now set $\h_k(n,p)=\XX$, where $p=1-\eps$, with $\eps>0$ fixed
but small enough to support our arguments.
(We make no attempt to optimize.)

\medskip
The statement we are to prove is
\beq{toshow2}
\mbox{w.h.p. $\max_{\h\in \m}|\XX\cap \h|<\max_x|\XX\cap \K_x|$,}
\enq
but we will find it better to work with a variant, \eqref{toshow3} below.
This requires a little preparation.
(Though getting to \eqref{toshow3} does require a little effort, we would
stress that the main interest of the present
work is in the proof of
\eqref{toshow3}---really meaning the proof of \eqref{toshow4}---in Section~\ref{Proof}.)

\medskip
For $x\in [n]$ and $0\leq \ell\leq n-1$,
let $\gG^x_\ell $ denote the collection of $\ell$-subsets
of $[n]\sm \{x\}$.
Let $\gS^x$
be the usual bigraph on $\gG^x_k\cup\gG^x_{k+1}$
(that is, with
adjacency given by set containment), and write
$N^x$ for neighborhood in $\gS^x$.
For $A\sub \gG_k^x$ set $\gd_x(A) = (|N^x(A)|-|A|)/|A|$
(so
$N^x(A)$ is the upper shadow of $A$
in $2^{[n]\sm \{x\}}$ and our usage here follows that in
Proposition~\ref{KKProp}).

For
$\h\in \m$ (and $x\in [n]$),
let $A^x(\h)=\h\sm\K_x$, $J^x(\h)=\K_x\sm \h$ and
$G^x(\h)=N^x(A^x(\h))$;
thus $A^x(\h)$
and $G^x(\h)$
are subsets of $\gG^x_k$ and
$\gG^x_{k+1}$ respectively.
Note that
\beq{AJHF}
|\XX\cap A^x(\h)|- |\XX\cap J^x(\h)| =
|\XX\cap \h|- |\XX\cap \K_x|.
\enq
For the next two paragraphs we fix $\h\in \m$ and use
$A^x$ for $A^x(\h)$ and similarly for $J^x$ and $G^x$.

For $\B\sub 2^{[n]}$ set $\B^c=\{[n]\sm T:T\in \B\}$.
It is easy to see that
maximality
of $\h$ implies that $G^x =(J^x)^c$ (for any $x$).
It is not quite true that maximality also implies that the sets
$A^x$ are closed (in $\gS^x$), but they are mostly not far from being so,
as follows.

With $\T^x=\T^x(\h) = [A^x]\sm A^x$
(recall $[\cdot ]$ is closure) and $T\in \K$ not containing $x$, we have
$T\in \T^x$ iff $T$ meets all members of $\h_x$
(equivalently, $N^x(T)\sub G^x$) and
$[n]\sm (\{x\}\cup T)\in \h$.
In particular,
\beq{Tx1}
\mbox{the sets $\T^x$ are pairwise disjoint}
\enq
(since for $T\in \T^x$ and $y\not\in T\cup \{x\}$,
$T$ misses $[n]\sm (\{x\}\cup T)\in \h_y$, implying $T\not\in \T^y$).
Moreover, $\T=\T(\h):=\cup_x\T^x$ is relatively small:
\beq{Tsize}
|\T|< (1+1/k)(|\K|-2|\h|) = (1+1/k)[2(M-|\h|)+M/k].
\enq
To see this, let $\nabla(\h)$ be the set of disjoint pairs $(S,T)\in \K^2$ with $S\in \h$
and (therefore) $T\in \K\sm \h$.  (In other language $\nabla(\h)$ is the edge-boundary of $\h$
in the Kneser graph $K(n,k)$.)
Since each $T\in \T$ belongs to exactly one such pair
(and no member of $\K\sm \h$ belongs to more than $k+1$), we have
\[
(k+1)|\h|  = |\nabla(\h)| \leq |\T| + (k+1)(|\K|-|\h|-|\T|),
\]
which rearranges to \eqref{Tsize}.

\medskip
Let $\Q$ be the event that there are $\h\in \m$ and
$x\in [n]$ for which
$A^x(\h)$ is 2-linked (in $\gS^x$),
\beq{dxa'}
\gd_x([A^x(\h)])> 1/(3k),
\enq
and
$|\XX\cap \h|\geq |\XX\cap \K_x|$.
Our main point, the aforementioned variant of \eqref{toshow2}, is
\beq{toshow3}
\mathbb{P}(\Q)=o(1).
\enq

\medskip
Before proving this (in Section~\ref{Proof}), we
show that it implies \eqref{toshow2}, by showing that
failure of
\eqref{toshow2} implies $\Q$.
Supposing \eqref{toshow2} fails,
let $\h\in \m$ maximize
$|\XX\cap \h|$ and,
with $\h^* =\{T\in \K:|T\cap [3]|\geq 2\}$,
choose $x\in [n]$ as follows.
If $|\h| > |\h^*|$ then let
$x$ be some vertex with $d_{\h}(x)=\gD(\h)$;
otherwise,
choose $\gl=\gl(n)$ satisfying $1\ll \gl < \sqrt{n}$
and let $x$ be some vertex with
\beq{Tx}
\mbox{$|A^x| < (1+2/\gl)(k+1)|\h|/n~$ and $~ |\T^x|<\gl^{-1}|\T|$.}
\enq
(Existence is given by Markov's Inequality:  since $\sum |A^x| = (k+1)|\h|$, the number of $x$'s
violating at least one of the two conditions in \eqref{Tx} is at most
$((1+2/\gl)^{-1} +\gl/n)n < n$.)

Let $A=A^x(\h)$, $J=J^x(\h)$, $G=G^x(\h)$ and $\T^x=\T^x(\h)$.
By \eqref{AJHF} (and our assumption that
$|\XX\cap \h|\geq|\XX\cap \K_y| ~\forall y$)
we have
\[  
|\XX\cap A|\geq |\XX\cap J|  .
\]  

Suppose first that $A$ is 2-linked in $\gS^x$.
In this case we claim $(\h,x)$ itself satisfies
$\Q$, i.e.\ that
\eqref{dxa'} holds.
If $|\h| > |\h^*|$, then Theorem~\ref{TFrankl} gives
$\gD(\h)>\gD(\h^*) \sim 3M/4$,
whence (using \eqref{Tsize} and noting that here $|\K|-2|\h|= o(M)$),
\[
|[A]| =|A|+|\T^x|\leq |A|+|\T| <(1+o(1)) M/4;
\]
so \eqref{dxa'} is given by \eqref{delta1}.
If, on the other hand, $|\h|\leq |\h^*|$, then,
noting that $M-|\h^*|\sim M/(4k)$, we find that
\[
\gd_x([A]) > (1-o(1))/(2k)
\]
(so also \eqref{dxa'}) follows from
\beq{A}
|[A]| = |A|+|\T^x| < (1+o(1)) M/2
\enq
and
\begin{eqnarray}\label{B}
|G|-|[A]|& =& |J|-|A|-|\T^x| ~ =~ M-|\h| -|\T^x|\\
&\sim & M-|\h| ~\geq~M-|\h^*| ~>~ (1-o(1))M/(4k).
\end{eqnarray}
Here \eqref{A} follows from \eqref{Tx} (and \eqref{Tsize}).
For the ``$\sim$" in \eqref{B} note that,
since $M-|\h| =\gO(M/k)$,
\eqref{Tsize} and the second part of \eqref{Tx} give $|\T^x| = o (M-|\h|)$.

\medskip
Now suppose $A$ is not 2-linked, and let
$A_1\dots A_s$ be its 2-linked components
(defined in the obvious way),
$G_i=N^x(A_i)$, $J_i= (G_i)^c$
(so the $G_i$'s and $J_i$'s partition $G$ and $J$ respectively)
and $\h_i=(\K_x\sm J_i)\cup A_i$.

The $\h_i$'s are intersecting but not necessarily maximal, so for each $i$ we fix
some maximal intersecting $\his\supseteq \h_i$ and set $\ais=\his\sm\K_x$
($=A^x(\his)$ once we know $\his\in\m$).
Notice that
\beq{hi1}
\his\sm\h_i\sub [A_i]\sm A_i ~~(\sub \T^x)
\enq
(since  $\K_x\sm J_i$ consists precisely of those sets on $x$ that meet all sets in $A_i$,
we have
$
\his\sm \h_i\sub \{T\in \K\sm \K_x: T\cap S\neq \0 ~\forall S\in \K_x\sm J_i\} = [A_i])
$.
In particular \eqref{hi1} implies
\beq{Gix}
G^x(\his)=G_i,
\enq
\[
\mbox{$\ais$ is 2-linked}
\]
(immediate from \eqref{Gix} and the fact that $A_i$ is 2-linked) and
\beq{his}
\his\in\m
\enq
(that is, $\his$ is not principal).
For \eqref{his} note that $\his\neq\K_x$, since $\ais\neq \0$,
while $\his =\K_y$ ($y\neq x$) requires $G_i=\{S\in \gG_{k+1}:y\in S\}$,
implying that any $T\in \gG_k$ is 2-linked to $A_i$ and contradicting
the assumption that $A_i\neq A$.

\medskip
Suppose w.l.o.g. that $|A_1|=\max_i|A_i|$.
Then for $i\geq 2$ we have
\[
|[A_i]|\leq |A_i|+|\T^x|\leq |A|/2+|\T^x|< (1/4+o(1))M
\]
(where the second inequality is again given by \eqref{Tsize} if
$|\h| > |\h^*|$ and by \eqref{Tx} otherwise).
Thus, again using \eqref{delta1}, we have
$\gd_x([A_i])>(\log 2-o(1))/k$.
So we have $\Q$ (at $\his$, $x$) if
$|\XX\cap A_i|\geq |\XX\cap J_i|$ (recall
$\ais\supseteq A_i$ and $\K_x\sm \his =J_i$)
for some $i\geq 2$;
but if this is not the case then
(again using \eqref{AJHF})
\begin{eqnarray*}
|\XX\cap \h_1^\star|- |\XX\cap \K_x|
&=&|\XX\cap A_1^\star|-|\XX\cap J_1| \\
&\geq&
|\XX\cap A|-|\XX\cap J|
-\sum_{i\geq 2}(|\XX\cap A_i|-|\XX\cap J_i|)\\
&>& |\XX\cap A|-|\XX\cap J|
=|\XX\cap \h|- |\XX\cap \K_x|,
\end{eqnarray*}
contradicting the assumed maximality of $|\XX\cap \h|$.\qed

\section{Main point}\label{Proof}

Here we prove \eqref{toshow3}.
For the remainder of our discussion we work with a
fixed $x\in [n]$
and drop the super- and subscripts $x$ from our notation;
so to begin, we set
$\gS^x=\gS$ and $\gG^x_\ell=\gG_\ell$.
We will use
$G_A$ for the neighborhood of
$A\sub \gG_k$ in $\gS$
and
\[
\gd(A) = |G_A|/|A|-1  ~~(=\gd_x(A)) .
\]
We stress immediately that $A$ is now a general subset
of $\gG_k$, not necessarily $A^x(\h)$ for some $\h\in \m$.
(It will soon be a general \emph{closed} subset.)

\medskip
We extend $\XX$ to
$\gG_{k+1}$ by declaring that
$T\in \XX$ iff $[n]\sm T\in \XX$
(so here $T$ is a $(k+1)$-set off $x$ and $[n]\sm T$ is a $k$-set on $x$); we may then forget
about $J(\h)$ ($=J^x(\h)$) and
regard $\XX$ as a subset of $\gG_k\cup\gG_{k+1}$.
Note that ({\em cf.} \eqref{AJHF})
``$|\XX\cap \h|\geq |\XX\cap \K_x|$" in the definition of $\Q$ is then the same
as ``$|\XX\cap A|\geq |\XX\cap G_A|$" when $A=\h\sm\K_x$ and (thus) $G_A =J^x(\h)^c$,
and that this (trivially) implies $|\XX\cap [A]|\geq |\XX\cap G_A|$.

\medskip
For the proof of \eqref{toshow3} we will bound the
probability that $\Q$ occurs at our given $x$ with specified sizes of
$[A^x(\h)]$ and $G^x(\h)$ (so of $A$ and $G_A$ if we take $A=[A^x(\h)]$),
and then sum over possibilities for these sizes.
(Of course we need a bound $o(1/n)$ since we must eventually sum over $x$.)
Thus
we assume throughout that we have fixed $a,g$ with
\beq{dxa''}
\gd:=(g-a)/a > \max\{1/(3k),
(\log 2/k)\log_2(N/(2a))\}
\enq
(with the second term in the max again given by
Proposition~\ref{KKProp}),
and write $\A=\A(a,g)$ for the set of $A$'s satisfying
\beq{Ahyps}
\mbox{$A$ is closed and 2-linked,
$|A|=a$ and $|G_A|=g$}.
\enq
Notice that for $A\in \A$ we have
\begin{eqnarray}
|\nabla(G_A,\gG_k\sm A)| &=& (k+1)g-ka
\nonumber\\
&=& (k+1)(1+\gd)a -ka
=(1+(k+1)\gd )a.\label{after10}
\end{eqnarray}

Let $\Q(a,g)$ (= $\Q_x(a,g)$) be the event
that there is some $A\in \A(a,g)$ with
\beq{XGA}
|\XX\cap G_A|\leq |\XX\cap A|.
\enq
We show
\beq{toshow4}
\sum_{a,g}\mathbb{P}(\Q(a,g)) =o(1/n),
\enq
which, since the union of the $\Q(a,g)$'s is implied by occurrence of $\Q$ at $x$,
gives \eqref{toshow3}.

\medskip
The bound \eqref{toshow4} is (of course)
the heart of the matter, and the rest of our discussion is
devoted to its proof.
This turns out to be rather delicate, and a rough
indication of where we are headed may be helpful.
(The following description refers to the main case, namely $\gd\leq 1$,
considered below.)

For $A\in \A$ we have
\beq{EXGA}
\mathbb{E} |\XX\cap G_A|- \mathbb{E} |\XX\cap A| =\gd a p,
\enq
so can rule out \eqref{XGA} if we can say that the
quantities
$|\XX\cap G_A|$ and $|\XX\cap A|$
are close to their
expectations, where ``close" means somewhat small
relative to $\gd ap$ ($\approx \gd a$).
The problem (of course) is that
though each of these {\em individual} events is unlikely,
there are too many of them to allow a simple union bound.

Our remedy for this is to exploit similarities among the
$A$'s (and similarly $G_A$'s, but for this very rough
description we stick to $A$'s) to avoid paying repeatedly
for the same unlikely events.
To do this we specify each $A\in \A$ {\em via} several
``approximations," beginning with a set
$S_A$ for which $A\gD S_A$ is
{\em fairly} small, and then adding
and subtracting lesser pieces.
It will then follow that $|\XX\cap A|$ is close to its
expectation provided this is true of $|\XX\cap B|$ for each of
the relevant pieces $B$.

Thus
we will want to say that,
with $B$ ranging over some
to-be-specified collection of subsets of $\gG_k$, it is likely that
all $|\XX\cap B|$'s are close to their expectations.
Of course the probability that this fails for a particular $B$
grows with $|B|$ (since the benchmark $\gd ap$ does not change),
so we would like to arrange
that the larger $B$'s
are not too numerous.
For example, the above
$S_A$'s will necessarily be large
(of size roughly $a$), but there will be relatively few
of them, reflecting the fact that
a single $S$ will typically
be $S_A$ for many $A$'s.
We may think of $\A$
as consisting of a large number of variations on a
relatively small number of themes, though, as we will see,
controlling these themes and variations turns out to be not very straightforward.

As mentioned earlier, our approach here
has its roots in the beautiful ideas of A.A. Sapozhenko
\cite{Sap}, which were originally developed to deal with ``Dedekind's
Problem" and related questions in asymptotic enumeration.

\bn
{\em Proof of} \eqref{toshow4}.
As our fixed $x$ plays no further role in what follows, we will feel free to recycle
and use ``$x$" (along with $u,v,y,z$) to denote a general member of our ground set,
which we may now think of as $[2k]$.

\medskip
We divide the proof of \eqref{toshow4}
into two cases, large and small $\gd$, beginning with
the second, which is by far
the more interesting.
(Our treatment of this case can be adapted to work in general---actually with
most of the contortions below becoming unnecessary and/or vacuous---but this
seems pointless given how much simpler the proof is for large $\gd$.
It also seems worth stressing that, as mentioned earlier,
the real challenge is in dealing with quite small $\gd$
(and thus, according to \eqref{delta1}, with quite large $a$).)

\medskip
Assume then that $\gd\leq 1$ (say), and note that in this case \eqref{delta1}
gives
\beq{alarge}
a>(4/e)^k(4\sqrt{k})^{-1}=:a_0
\enq
(which is pretty far from the truth
but we have plenty of room here).

\mn
{\em Prospectus.}
Before we continue, some further pointers may be helpful.

This main part of our argument proceeds in two phases.
At the end of the first phase we will have associated with each
$A\in\A$ several sets (drawn from $\gG_k$, $\gG_{k+1}$ and $E(\gS)=\nabla(\gG_k,\gG_{k+1})$)
from which decent approximations of $A$ and $G_A$ can be built up in a useable way.

The output of this phase, summarized in the paragraphs following the proof of
Lemma~\ref{Ulemma}, is a collection, $\R$, of triples encapsulating the relevant information;
thus we produce a (typically many-to-one) map, $A\mapsto R(A)$, from $\A$ to $\R$.
We will then, for each $R\in \R$, fix some $A_R^*$ that maps to $R$, and take this
and the associated $G_R^*:= G_{A_R^*}$ to be our final approximations to $A$ and $G_A$
for each $A$ with $R(A)=R$.

The second phase of the argument then considers the intersections of $\XX$ with our
various pieces, as well as with the final bits that are added and subtracted to move
from the approximations to our actual $A$'s and $G_A$'s.
As suggested earlier, we hope to say that (w.h.p.) all these intersections have sizes
close to their expectations, and a central issue will be controlling the numbers of pieces
of various sizes:  the larger the pieces, the fewer we can afford.
This goal is achieved in Lemma~\ref{ML}, the workhorse of the second phase, from
which the desired application to \eqref{toshow4} follows fairly immediately:
see Corollary~\ref{MTpf} and the paragraph following its statement.

To get some feel for what's going on (in both phases) and how the whole thing fits together,
the reader might take an early look at the discussion of the
second phase through the proof of Corollary~\ref{MTpf},
ignoring the particulars of Lemma~\ref{ML} and reading the proof of the corollary
more or less at the level of Venn diagrams (without worrying about the meanings
of its many presently undefined ingredients).

\mn
{\em First phase.}
For $X\sub V:=V(\gS)$, let $N^i (X)=\{u\in V: \rho (u,X)\leq i\}$
(where, recall, $\rho$ is graph-theoretic distance).
For $A\in \A$ ($=\A(a,g)$), say a
path is $A$-{\em good} if it is of the form
$vx_1 y x_2$ with $x_1,x_2\in A$ (so in particular has length 3),
and for $v\in\gG_{k+1}$,
let $\vp(v,A)$ denote the number of $A$-good paths beginning
with $v$.

Fix a small $\gz>0$ (we just need $\gz<1/2$), and
set $\vt =\gz/2$ and
$$
G^0_A =\{v\in G_A : \vp(v,A)\geq  (1/4)k^{3-\gz}\}.
$$

\mn
For $T\sub\gG_k$ set $W_T=N^3 (T)\cap \gG_{k+1}$
and
\beq{ST}
S_T=\{ x: d_{W_T} (x)\geq k/2\}  ~~~(\sub \gG_k).
\enq
For $T\sub A\in \A$,
let $F_{A,T} =\nabla(N(T),\gG_k\sm A)$
and $Z_{A,T} = N(N^2(T)\cap A)\sub W_T$.
Notice that $w\in Z_{A,T}$ iff either $w\in N(T)$ or there is a path $xyzw$ with
$x\in T$ and $yz\not\in F_{A,T}$
(equivalently an $A$-good path from $w$ to $T$); in particular
$Z_{A,T}$ {\em is determined by $T$ and $F_{A,T}$.}

\begin{lemma}\label{Tlemma}
There is a fixed K such that
for each $A\in \A$
there is a $T\sub A$ satisfying

\mn
{\rm (T1)} $|T|\leq Kak^{-3+\gz}\log k$,

\mn
{\rm (T2)}
$|F_{A,T}|\leq K\gd a k^{-1+\gz}\log k$,

\mn
{\rm (T3)}
$|G^0_A \sm Z_{A,T}|\leq Kak^{-2}$,

\mn
{\rm (T4)}
$|W_T\sm G_A| < K\gd a k^\gz \log k$, and

\mn
{\rm (T5)}
$|A\sm S_T| < K\gd a k^{-\vt}$.
\end{lemma}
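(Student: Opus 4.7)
The plan is to construct $T$ by independent Bernoulli sampling from $A$: include each $x\in A$ in $T$ with probability $\pi:=Ck^{-3+\gz}\log k$, where $C=C(K)$ is a sufficiently large constant. I will then show that the expectation of each of the five quantities appearing in (T1)--(T5) is at most $\tfrac{1}{10}$ of the corresponding bound, so that five applications of Markov's inequality, together with a union bound, produce a $T$ meeting all five conditions simultaneously.

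Four of the five conditions are essentially mechanical. (T1) is immediate, as $\E|T|=\pi a$. For (T2), writing $a_y=|N(y)\cap A|$ and $b_y=|N(y)\sm A|$, the bound $\Pr(y\in N(T))\le \pi a_y$ gives
\[
\E|F_{A,T}|\;\le\;\pi\sum_y a_y b_y\;\le\;(k+1)\pi\sum_{y\in G_A}b_y\;=\;(k+1)\pi(1+(k+1)\gd)a,
\]
using the identity $\sum_{y\in G_A}b_y=(1+(k+1)\gd)a$ displayed immediately before the lemma; this is $O(\gd a k^{-1+\gz}\log k)$, as required. (T4) is then a direct corollary of (T2), since every $v\in W_T\sm G_A$ must lie in $N(z)$ for some $(y,z)\in F_{A,T}$ (the edge $yz$ forming the last two steps of a 3-walk from $T$ to $v$, with $z\notin A$ forced by $v\notin G_A$), whence $|W_T\sm G_A|\le k|F_{A,T}|$. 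For (T3), fix $v\in G^0_A$ and let $B_v\sub A$ be the set of endpoints $x_2$ of $A$-good paths from $v$. A direct count in $\gS$ yields multiplicities $m_v(x_2)\le 2k$ when $x_2\sub v$ and $m_v(x_2)\le 2$ otherwise; since at most $k+1$ elements of $A$ can be subsets of $v$, this forces $|B_v|\ge c\, k^{3-\gz}$ for some absolute constant $c>0$, whence $\Pr(v\notin Z_{A,T})\le(1-\pi)^{|B_v|}\le k^{-\Omega(C)}$. Picking $C$ large makes $\E|G^0_A\sm Z_{A,T}|\le g\cdot k^{-\Omega(C)}\le Kak^{-2}$, where we use $g\le 2a$ (valid since $\gd\le 1$).

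The real obstacle is (T5), whose bound $K\gd a k^{-\vt}$ with $\vt=\gz/2$ is a full power of $k$ tighter than the others. The starting point is the Markov-style inequality
\[
|A\sm S_T|\;\le\;\frac{2}{k}\sum_{x\in A}|N(x)\sm W_T|\;=\;\frac{2}{k}\sum_{y\in G_A\sm W_T}a_y,
\]
which follows from the fact that $x\notin S_T\Rightarrow |N(x)\sm W_T|\ge k/2$; I then need to bound the expectation of the right-hand side. The essential feature to exploit is the three-step reach of $W_T$: $y\in W_T$ holds as soon as $T$ meets the set
\[
D_y\;:=\;A\cap\{x'\in\gG_k:|x'\cap y|\ge k-1\}\;\supseteq\;N(y)\cap A,
\]
incorporating the distance-$3$ as well as the distance-$1$ neighbors of $y$ in $A$, so $\Pr(y\notin W_T)\le(1-\pi)^{|D_y|}$. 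The plan is to partition $G_A$ dyadically by $|D_y|$: the classes with $|D_y|$ large are suppressed by $(1-\pi)^{|D_y|}$, while the total $a_y$-weight of classes with small $|D_y|$ is controlled by combining the identity $|\nabla(G_A,\gG_k\sm A)|=(1+(k+1)\gd)a$ with a counting argument on distance-$3$ neighborhoods. The factor $k^{-\vt}$ should emerge as the optimal trade-off point between these two regimes. Once the five expected-value bounds are in hand, Markov plus a union bound completes the proof.
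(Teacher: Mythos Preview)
Your treatment of (T1)--(T4) is correct and matches the paper's: the paper also samples $T$ at rate $q=16k^{-3+\gz}\log k$, bounds the expectations for (T1)--(T3), invokes Markov to find a single $T$ meeting all three, and then observes that (T4) follows \emph{deterministically} from (T2) via $|W_T\sm G_A|\le k|F_{A,T}|$, exactly as you do.

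The gap is in (T5). Your inequality $|A\sm S_T|\le (2/k)\sum_{y\in G_A\sm W_T}a_y$ is fine, but the plan to control the expectation by ``dyadic partition by $|D_y|$, with the small-$|D_y|$ classes handled by the identity $\sum_y b_y=(1+(k+1)\gd)a$ plus a counting argument'' does not go through as stated. The identity controls $b_y$-weight, not $a_y$-weight, and the two are not interchangeable: there can be many $y$ with $a_y$ large (of order $k^{1-\vt}$ or more) and $b_y$ small, yet $|D_y|$ only of that same order---far below the threshold $\Theta(k^{3-\gz})$ at which $(1-\pi)^{|D_y|}$ becomes small. For such $y$ neither the probability suppression nor the $b_y$-identity yields anything, and a direct dyadic sum over them can be as large as $\Theta(ka)$. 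No simple trade-off produces the factor $k^{-\vt}$ here.

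What is actually needed---and what the paper supplies as Lemma~\ref{G0lemma}---is an \emph{iterated} threshold decomposition: first $H=\{y:a_y<k^{1-\vt}\}$ and $B=\{x\in A:d_H(x)>k/2\}$, then a second round $I=\{y\notin H:d_{A\sm B}(y)<k^{1-\vt}/2\}$ and $C=\{x\in A\sm B:d_{H\cup I}(x)>k/4\}$. Chaining the edge counts gives $|H|,|I|=O(\gd a)$ and, crucially, $|B|,|C|=O(\gd a k^{-\vt})$, together with $G_A\sm G^0_A\sub H\cup I$. The set $I$ is precisely the collection of ``$a_y$ large, $|D_y|$ moderate'' vertices your sketch leaves uncontrolled; the point is that their $A$-neighbours are mostly confined to the small set $B$. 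With this in hand the paper does \emph{not} bound $\E|A\sm S_T|$ at all: it instead derives (T5) deterministically from (T3), observing that any $x\in A\sm S_T$ has $\ge k/4$ neighbours either in $G_A\sm G^0_A\sub H\cup I$ (forcing $x\in B\cup C$, hence $O(\gd a k^{-\vt})$ such $x$) or in $G^0_A\sm W_T\sub G^0_A\sm Z_{A,T}$ (and (T3) bounds these by $O(ak^{-2})=o(\gd a k^{-\vt})$). Your route via $\E|A\sm S_T|$ can also be completed, but only after proving essentially the same lemma to show $\sum_{y\in H\cup I}a_y=O(\gd a k^{1-\vt})$.
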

\nin
(The $\gz$ in the definition of $G_A^0$ is needed for the $\vt$ in (T5).
For the bound in (T5) we could actually get by with $O(\gd a\log^{-1} k)$;
see the discussion following \eqref{AsmSR} for more on this relatively delicate point
and \eqref{lastcount} for use of the bound.)

\medskip
The following auxiliary definitions and
lemma will be helpful in the proof of Lemma~\ref{Tlemma} and again later in the proof
of Lemma~\ref{ML}.
Fix $A\in \A$, set
$G_A=G$ and $G^0_A=G^0$, and define
$$H = \{y\in G : d_A(y)< k^{1-\vt}\},$$
$$B = \{x\in A : d_H(x)> k/2\},$$
$$I = \{y\in G\sm H : d_{A\sm B}(y)< k^{1-\vt}/2\}$$
and
$$C = \{x\in A\sm B : d_{H\cup I}(x)> k/4\}.$$

\begin{lemma}\label{G0lemma}
With the above definitions,
$|H\cup I|<O(\gd a)$, each of $|B|, |C| $ is $O(\gd ak^{-\vt})$,
and $G\sm G^0\sub H\cup I$.

\end{lemma}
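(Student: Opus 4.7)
The plan is to exploit the edge bound $|\nabla(G, \gG_k \sm A)| = (k+1)g - ka = (1+(k+1)\gd)a$, which is $O(k\gd a)$ since $\gd \geq 1/(3k)$, and then peel off the sets $H, B, I, C$ in that order via repeated double counting.

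First I would bound $|H|$: each $y \in H$ satisfies $d_{\gG_k \sm A}(y) = (k+1)-d_A(y) \geq k/2$ for $k$ large, so $(k/2)|H| \leq |\nabla(G, \gG_k \sm A)| = O(k\gd a)$ and $|H| = O(\gd a)$. Double-counting edges between $B$ and $H$,
\[
\tfrac{k}{2}|B| < \sum_{x \in B} d_H(x) = \sum_{y \in H} d_B(y) \leq \sum_{y \in H} d_A(y) < k^{1-\vt}|H|,
\]
gives $|B| = O(\gd a k^{-\vt})$. Next, each $y \in I \sub G \sm H$ has $d_A(y) \geq k^{1-\vt}$ and $d_{A \sm B}(y) < k^{1-\vt}/2$, hence $d_B(y) > k^{1-\vt}/2$; double-counting between $I$ and $B$ then gives $\tfrac{1}{2} k^{1-\vt} |I| < \sum_{y \in I} d_B(y) \leq k|B|$, so $|I| = O(k^{\vt} |B|) = O(\gd a)$ and $|H \cup I| = O(\gd a)$. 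Finally, every $y \in H \cup I$ has $d_{A \sm B}(y) < k^{1-\vt}$ (from the $H$-condition if $y \in H$, from the $I$-condition if $y \in I$), so
\[
\tfrac{k}{4}|C| < \sum_{x \in C} d_{H \cup I}(x) = \sum_{y \in H \cup I} d_C(y) \leq k^{1-\vt}|H \cup I|,
\]
yielding $|C| = O(\gd a k^{-\vt})$.

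For the containment $G \sm G^0 \sub H \cup I$, I would fix $v \in G \sm (H \cup I)$ and produce $\gtrsim k^{3-\gz}$ $A$-good paths $v x_1 y x_2$ starting at $v$. Since $v \notin I$ (and $v \notin H$), $d_{A \sm B}(v) \geq k^{1-\vt}/2$; each such $x_1 \notin B$ satisfies $d_{G \sm H}(x_1) \geq k - k/2 = k/2$; and each neighbor $y \in G \sm H$ of $x_1$ admits $\geq k^{1-\vt}-1$ terminal choices $x_2 \in A \sm \{x_1\}$. Multiplying (and subtracting at most one $y$ to avoid $y = v$), the count is at least $(k^{1-\vt}/2)(k/2 - 1)(k^{1-\vt}-1) \geq \tfrac{1}{4} k^{3-2\vt} = \tfrac{1}{4} k^{3-\gz}$ for $k$ large, since $\vt = \gz/2$; hence $v \in G^0$. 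The main technical point --- and the entire reason for the cascaded definition $H \to B \to I \to C$ --- is squeezing the factor $k^{-\vt}$ into the bounds on $|B|$ and $|C|$ (this is the source of the ``extra $\gz$'' flagged after the lemma), which will later let Lemma~\ref{Tlemma}(T5) improve $O(\gd a)$ to $O(\gd a k^{-\vt})$.
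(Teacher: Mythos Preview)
Your proposal is correct and follows essentially the same argument as the paper: the same chain of double counts $H\to B\to I\to C$ against the edge bound $|\nabla(G,\gG_k\sm A)|=(1+(k+1)\gd)a$, and the same path-count $(k^{1-\vt}/2)\cdot(k/2)\cdot k^{1-\vt}$ for $v\in G\sm(H\cup I)$ (your $-1$'s for distinctness are harmless). The only cosmetic difference is that the paper tightens $|\nabla(I,B)|<k|B|/2$ (using $d_I(x)<k/2$ for $x\in B$) where you use the cruder $k|B|$, but this does not affect the $O(\gd a)$ conclusion.
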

\begin{proof}
We have
$$(k+1-k^{1-\vt})|H|\leq |\nabla (H,\gG_k\sm A)|\leq
|\nabla (G,\gG_k\sm A)|=(1+(k+1)\gd )a$$
(see \eqref{after10} for the equality),
$$(k/2) |B|<|\nabla(B,H)|< k^{1-\vt} |H|,$$
$$(k^{1-\vt}/2) |I|< |\nabla(I,B)|< k|B|/2$$
and
$$(k/4)|C|< |\nabla(C,H\cup I)| < |H\cup I|k^{1-\vt},$$
implying
$|H|<(4+o(1))\gd a$
(using \eqref{dxa''}),
$|B|<(8+o(1))\gd a k^{-\vt}$,
$|I|<(8+o(1))\gd a $
and
$|C|< (48+o(1))\gd a k^{-\vt}$.
This gives the first two assertions in the lemma.
The third is given by the observation that for
$y\in G\sm (H\cup I)$ the number of paths
$ywzx$ with $(w,z,x)\in (A\sm B)\times (G\sm H)\times A$
is at least $(k^{1-\vt}/2)(k/2)k^{1-\vt}$.
\end{proof}

\mn
{\em Proof of Lemma \ref{Tlemma}.}
Here we will find it more convenient to use
``big Oh" notation; that is, we will prove the lemma
with each of the bounds $K\cdot X$
appearing in (T1)-(T5) replaced by $O(X)$.
We first show existence of $T$ satisfying (T1)-(T3)
and then observe that any such $T$ also
satisfies (T4) and (T5).

Let $q=16 k^{-3+\gz}\log k$ and $\TT=A_q$
(the random subset of $A$ in which elements of $A$ appear independently,
each with probability $q$).
To show that there is a $T$ satisfying (T1)-(T3),
it is enough to
show that the stated bounds (again, in their ``big Oh" forms)
hold for the
{\em expectations} of
the set sizes in question,
since Markov's Inequality then implies existence of a $T$
for which each of these quantities is at
most three times its expectation.
This is of course true for  $\mathbb{E}|\TT|=aq$.
For (T2) we have (using \eqref{after10} for the final inequality)
\begin{eqnarray*}
\mathbb{E} |F_{A,\TT}| &=& \sum_{x\in G}\mathbb{P}(x\in N(\TT))d_{\gG_k\sm A}(x)\\
&\leq & q\sum_{x\in G}d_A(x)d_{\gG_k\sm A}(x) \\
&\leq & qk|\nabla (G,\gG_k\sm A)| <O(\gd a k^{-1+\gz}\log k).
\end{eqnarray*}

To bound the expectation for
(T3), notice
that for $v\in G^0$, there are at least $(1/8)k^{3-\gz}$
vertices $x\in A$ for which $x\in T$ implies $v\in Z_{A,T}$.
(This is true of any $x$ for which there is an $A$-good path
from $v$ to $x$ and,
since two vertices at distance 3 are connected by exactly
two paths of length 3 in $\gS$,
the number of such $x$'s is at least $\vp(v,A)/2$.)
The probability that such a $v$ does not belong to
$Z_{A,\TT}$ is thus
at most $(1-q)^{(1/8)k^{3-\gz}}< k^{-2}$, so that
$\mathbb{E}|G^0\sm Z_{A,\TT}|< gk^{-2}$
(which gives the bound in (T3) since we assume $g=O(a)$;
of course the assumption isn't really needed here, as
we could instead have arranged (e.g.) $\mathbb{E}|G^0\sm Z_{A,\TT}|< gk^{-3}$).

This completes the discussion of (T1)-(T3) and we turn to
the last two properties requested of $T$.
We first observe that (T4) follows from (T2), since in fact
\beq{WTsm}
|W_T\sm G|\leq k|F_{A,T}|.
\enq
To see this just notice that
if $w\in W_T\sm G$, then (since $w\in W_T$)
there is a
path $xyzw$ with $x\in T$ and (therefore)
$y\in N(T)$, but
$z\not\in A$ (since $w\not\in G$), so that
$yz\in F_{A,T}$ (and each such $yz$ gives rise to at most $k$ such $w$'s).

For (T5), note that (according to
the definition of $S_T$ in \eqref{ST})
any $x\in A\sm S_T$ has at least $k/4$ neighbors in
one of $G\sm G^0$, $G^0\sm W_T$.
By Lemma \ref{G0lemma}, $x$'s of the first type belong to $B\cup C$
and number at most
$O(\gd ak^{-\vt})$.
On the other hand, by (T3) (and \eqref{dxa''}),
the number of the second type is at most
$$
(4/k)|G^0\sm W_T|(k+1) < O(ak^{-2})
<o(\gd a k^{-\vt}).
$$
\qed

We think of $W_T$ in Lemma~\ref{Tlemma} as a first
approximation to $G_A$, and $Z_{A,T}$ as a second approximation
satisfying
\beq{WZA}
Z_{A,T}\sub W_T\cap G_A
\enq
that discards vertices that got into $W_T$ on spurious
grounds.
Similarly, the next lemma prunes our first approximation,
$S_T$, of $A$ to get a better second approximation.

\begin{lemma}\label{Ulemma}
There is a fixed K such that
for any $A\in \A$ and $T\sub A$ satisfying
(T4), there is some $U\sub W_T\sm G_A$
with

\mn
{\rm (U1)} $|U|\leq K\gd a k^{-1+\gz} \log^2 k$ and

\mn
{\rm (U2)}
$|(S_T\sm A)\sm N(U)|\leq K\gd a$.
\end{lemma}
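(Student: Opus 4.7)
\textbf{Proof plan for Lemma~\ref{Ulemma}.} I will obtain $U$ by a random sampling from $W_T\setminus G_A$, so $U\subseteq W_T\setminus G_A$ is automatic, and the whole task reduces to lower-bounding, for most $x\in S_T\setminus A$, the quantity
\[
d(x):=|N(x)\cap (W_T\setminus G_A)|.
\]
The key input is the shadow-boundary identity already noted in the excerpt,
\[
|\nabla(G_A,\gG_k\sm A)|=(1+(k+1)\gd)a=O(k\gd a),
\]
which limits how many edges can run from $G_A$ to points outside $A$.

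Write $P=S_T\sm A$ and split it as $P=P_1\cup P_2$, with
\[
P_2=\{x\in P:d(x)<k/8\},\qquad P_1=P\sm P_2.
\]
For $x\in P_2$ the definition of $S_T$ gives $d_{W_T}(x)\ge k/2$, so $x$ has at least $3k/8$ neighbors in $W_T\cap G_A$; since $x\notin A$, each such neighbor contributes an edge to $\nabla(G_A,\gG_k\sm A)$. Summing over $P_2$ yields
\[
(3k/8)\,|P_2|\le |\nabla(G_A,\gG_k\sm A)|=O(k\gd a),
\]
hence $|P_2|=O(\gd a)$. Thus $P_2$ can already be discarded, and we only need $U$ to cover $P_1$.

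For $P_1$, I will take $U$ to be a random subset of $W_T\sm G_A$ in which each element is kept independently with probability $q=16\log k/k$. Then by (T4),
\[
\E|U|=q\,|W_T\sm G_A|=O(\gd a\,k^{-1+\gz}\log^2 k),
\]
matching (U1). For each $x\in P_1$ we have $d(x)\ge k/8$, so
\[
\Pr(x\notin N(U))=(1-q)^{d(x)}\le e^{-q k/8}=k^{-2}.
\]
A crude bound on $|P_1|$ comes from double-counting $\nabla(P_1,W_T\sm G_A)$: on one hand this is at least $(k/8)|P_1|$, on the other it is at most $(k+1)|W_T\sm G_A|=O(\gd a k^{1+\gz}\log k)$, so $|P_1|=O(\gd a k^{\gz}\log k)$. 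Therefore $\E|P_1\sm N(U)|\le |P_1|/k^2=o(\gd a)$, and combining with $|P_2|=O(\gd a)$ one gets $\E|P\sm N(U)|=O(\gd a)$, which is (U2). A standard application of Markov's inequality to $|U|+C\cdot |P\sm N(U)|$ then supplies a deterministic choice of $U$ realizing both bounds simultaneously.

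The whole argument is short and the main obstacle is conceptual rather than technical: to see that one may split $P$ at $d(x)=k/8$ and that the points with \emph{low} $d$ are actually very few, because each of them costs $\Theta(k)$ units of the shadow-boundary budget which is only $O(k\gd a)$. Once this dichotomy is in place the covering step is a textbook random-covering computation, and the constants in $q$ and in the threshold $k/8$ can be adjusted freely to absorb the factor $\log^2 k$ in (U1) and the error $o(\gd a)$ in (U2). \qed
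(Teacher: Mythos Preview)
Your argument is correct and is essentially the same as the paper's own proof: both split $S_T\setminus A$ according to the degree into $W_T\setminus G_A$ (the paper uses threshold $k/4$, you use $k/8$), bound the low-degree part via $|\nabla(G_A,\Gamma_k\setminus A)|=(1+(k+1)\delta)a$, and cover the high-degree part by a random subset of $W_T\setminus G_A$ with density $\Theta(k^{-1}\log k)$, using (T4) for the size of $U$. The only cosmetic difference is that you make the joint Markov step explicit, whereas the paper leaves the simultaneous control of $|U|$ and $|L\setminus N(U)|$ implicit.
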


\nin
The second approximation mentioned above is then
$S_T\sm N(U)$, which in particular satisfies
\beq{S'}
S_T\supseteq S_T\sm N(U)\supseteq S_T\cap A.
\enq

\mn
{\em Proof of Lemma \ref{Ulemma}.}
Here we again (as in the proof of Lemma \ref{Tlemma})
switch to ``big Oh" notation.
Set $G=G_A$, $W=W_T$ and $S=S_T$.
Let $q=4k^{-1}\log k$ and $\UU=(W\sm G)_{q}$.
By the definition of $S=S_T$, each
$x\in S\sm A$ has at least $k/4$ neighbors in one of
$W\sm G$, $G$.
Let
$$L=\{ x\in S\sm A : d_{W\sm G} (x)\geq k/4\}.$$
Then
$|L|\leq (4/k)|W\sm G|(k+1)
=O(\gd a k^{\gz}\log k)$ (by (T4)).
On the other hand, for $x\in L$ we have
$\mathbb{P} (x\not\in N(\UU))\leq (1-q)^{k/4}< k^{-1}$,
so there is some $U$
with
\[
|L\sm N(U)|\leq
\mathbb{E} |L\sm N(\UU)| <|L|/k=o(\gd a).
\]
Finally, since $x\in (S\sm A)\sm L$ implies
$d_G(x)>k/4$, we have
$$|(S\sm A)\sm L|
\leq (4/k)|\nabla (G,\gG_k\sm A)|=4(1+(k+1)\gd )a/k
=O(\gd a).$$
The lemma follows.
\qed

\medskip
Now write $K$ for the larger of the constants appearing in
Lemmas~\ref{Tlemma} and \ref{Ulemma}.
For each $A\in \A$ fix some $T=T_A\sub A$ satisfying
(T1)-(T5)
and then some $U=U_A\sub W_T\sm G_A$ satisfying
(U1)-(U2), and set:
$W_A=W_T$, $S_A=S_T$,
$F_A=F_{A,T}$, $Z_A=Z_{A,T}$,
$S'_A= S_T\sm N(U)$ and
$\rrr_A = \rrr(A)= (T_A,F_A,U_A)$.
(We prefer $R_A$ but will sometimes use $\rrr(A)$ to avoid double subscripts.)
We may
think of $T_A,F_A,U_A$ as ``primary" objects, which
we need to specify, and
$W_A,S_A,Z_A,S'_A$ as ``secondary" objects,
which are functions of the primary objects.

Let $\R=\{\rrr_A:A\in \A\}$.
If $\rrr=\rrr_A$ then we also
set $W_\rrr=W_A$ (which is the same for all
$A$ with $\rrr_A=\rrr$), and similarly for
the other objects subscripted by $A$ in the preceding paragraph.
For each $\rrr\in \R$ fix some $A^*=A^*_\rrr\in \A$ with
$\rrr_{A^*}=\rrr$, and let $G^*_\rrr=G_{A^*}$.
Now suppose $A\in \A$, $G=G_A$, $R=R_A$, $A^*=A^*_R$ and
$G^*=G_R^*$.  Notice that, given $A^*$ and $G^*$,
\beq{Adet}
\mbox{{\em $A$ is determined by $A\sm A^*$ and $G\cap G^*$.}}
\enq
(Actually a closed $A\sub \gG_k$ with $G=G_A$
is determined by $B$, $G_B$,
$A\sm B$ and $G\cap G_B$ for {\em any}
$B\sub \gG_k$, since
$$A\cap B = \{x\in B:N(x)\sub G\cap G_B\};$$
namely, $x\in A$ iff $N(x)\sub G$,
which for $x\in B$ is the same as $N(x)\sub G\cap G_B$.)

\mn
{\em Second phase.}
We now turn to $\XX$.  In what follows we
assume the constant $\eps$ ($=1-p$) is small enough to support our argument,
making no attempt to optimize.

For $\eta>0$ and $B\sub V(\gS)$
(we will always have $B\sub \gG_k$ or $B\sub \gG_{k+1}$), we will be interested in the event
\beq{EBeta}
E_{B,\eta}=\{||\XX\cap B|-|B|p|> \eta \gd a p\}.
\enq
(The second $p$ on the right-hand side is unnecessary
but we keep it as a
reminder of where we are:  if $p$ were smaller,
then this factor {\em would} be relevant.)
Say a collection $\B$
of sets is $\eta$-{\em nice} if
\beq{etanice}
\mathbb{P}(\cup_{B\in \B}E_{B,\eta}) < \exp[-\gO(ak^{-2})].
\enq

Fix a smallish $\eta$; for concreteness,
say $\eta = 0.08$
(we need $6\eta < 0.5$).
The next, regrettably (but as far as we can see unavoidably)
elaborate statement
is most of the story.

\begin{lemma}\label{ML}
The following collections are $\eta$-nice:

\mn
{\rm (a)}
$\{W_\rrr:\rrr\in \R\}$;

\mn
{\rm (b)}
$\{S_\rrr:\rrr\in \R\}$;

\mn
{\rm (c)}
$\{W_\rrr\sm Z_\rrr:\rrr\in \R\}$;

\mn
{\rm (d)}
$\{S_\rrr\sm S'_\rrr:\rrr\in \R\}$;

\mn
{\rm (e)}
$\{S'_\rrr\sm A^*_\rrr:\rrr\in \R\}$;

\mn
{\rm (f)}
$\{A^*_\rrr\sm S'_\rrr:\rrr\in \R\}$;

\mn
{\rm (g)}
$\{G^*_\rrr\sm Z_\rrr:\rrr\in \R\}$;

\mn
{\rm (h)}
$\{A\sm A^*_{\rrr(A)}:A\in \A\}$;

\mn
{\rm (i)}
$\{A^*_{\rrr(A)}\sm A:A\in \A\}$;

\mn
{\rm (j)}
$\{G_A\sm G^*_{\rrr(A)}:A\in \A\}$;

\mn
{\rm (k)}
$\{G_A\cap  (G^*_{\rrr(A)}\sm Z_{\rrr(A)}):A\in \A\}$.
\end{lemma}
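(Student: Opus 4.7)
The strategy is Chernoff's inequality (Theorem~\ref{Chern}) applied to each $B\in\B$ followed by a union bound. With deviation $\gl = \eta\gd ap$, the per-$B$ bound from Chernoff is of order $\exp[-\gO(\gd^2a^2/(|B|+\gd a))]$. Using $\gd\geq 1/(3k)$ together with the size bounds below, one verifies that this is at most $\exp[-\gO(a/k^2)]$ in all eleven cases; the issue is therefore to bound $|\B|$ tightly enough that the union bound does not destroy this exponent.

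First I would work out the size $|B|$ for each collection, using (T1)-(T5), (U1)-(U2), the inclusion \eqref{S'}, and the consequent fact that $|A \gD A^*_\rrr| = O(\gd a)$ for every $A \in \A$ with $\rrr(A) = \rrr$ (noting that $A\cap N(U_\rrr)=\0$ since $U_\rrr \sub W_T\sm G_A$, so $A\sm S'_\rrr = A\sm S_\rrr$, which is bounded by (T5)). In summary: (e), (g), (h), (i), (k) have size $O(\gd a)$; (f) has $O(\gd a k^{-\vt})$; (c) has $O(\gd a k^\gz\log k)$ and (d) has $O(\gd a k^\gz\log^2 k)$; (j) has $O(\gd a k)$ via $G_A\sm G^*_\rrr \sub N(A\sm A^*)$ with $|N(A\sm A^*)|\leq k|A\sm A^*|$; and (a), (b) have $(1+\gd)a + O(\gd a k^\gz\log k)$ from $|G_A|\leq (1+\gd)a$ plus (T4). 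For the counting of $|\B|$: collections (a)-(g) are indexed by $\rrr = (T, F, U) \in \R$, so $|\B| \leq |\R|$; given $T$, the options for $F$ and $U$ are tame via \eqref{Cnt} since $F$ lies among the $O(|T|k^2)$ edges incident to $N(T)$ and $U \sub W_T$. Collections (h)-(k) are indexed by $A \in \A$; using \eqref{Adet} together with the $O(\gd a)$-bound on $A\gD A^*_\rrr$, $A$ is determined given $\rrr$ up to $\exp[O(\gd a)]$ choices---adequate against the $\exp[-\gO(\gd a)]$ Chernoff bounds for (h), (i), (k).

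The principal obstacle is bounding $|\R|$ by $\exp[o(a/k^2)]$ as required for (a) and (b). The naive enumeration of $T \sub \gG_k$ with $|T| \leq s_T = O(ak^{-3+\gz}\log k)$ costs $O(s_T k) = O(ak^{-2+\gz}\log k)$ in the exponent, overshooting our $a/k^2$ budget by a factor of $k^\gz\log k$. The Sapozhenko-style remedy---foreshadowed in the discussion following \eqref{toshow4}---is to enumerate $T$ via a tree-based covering in a suitable auxiliary graph, invoking Proposition~\ref{Knuth} with degree $O(k^2)$ in the distance-two graph on $\gG_k$, so as to exploit both the 2-linkedness of a containing $A$ and the fact that $T$ arises as a density-$O(k^{-3+\gz}\log k)$ sample of $A$. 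A parallel sharpening is needed for (j), where the single-$B$ Chernoff bound is only $\exp[-\gO(\gd a/k)]$, so the naive $\exp[O(\gd a)]$ per-$\rrr$ refinement count must be tightened to $\exp[o(\gd a/k)]$ via finer structural control of $G_A \sm G^*_\rrr$ (using the relationship $G_A\sm G^*\sub N(A\sm A^*)\sm N(A\cap A^*)$). Making these two counts sufficiently tight is the combinatorial heart of the proof.
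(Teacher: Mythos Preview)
Your overall architecture is right, but you have misplaced the difficulties in two places, and one of your size estimates is off.

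First, (j): your bound $|G_A\sm G^*_\rrr|\leq k|A\sm A^*|=O(\gd a k)$ is far too weak and creates an artificial obstacle. The paper instead uses that $Z_\rrr\sub G_A$ and $Z_\rrr\sub G^*_\rrr$ (see \eqref{WZA}), so $G_A\sm G^*_\rrr\sub G_A\sm Z_\rrr$, and then $|G_A\sm Z_\rrr|=O(\gd a)$ by the same argument as \eqref{G*rr} (which applies to any $A$ with $\rrr(A)=\rrr$). With this, (j) is no harder than (h), (i), (k), and your proposed ``finer structural control of $G_A\sm G^*_\rrr$'' is unnecessary.

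Second, for (h)--(k) your claim that the Chernoff bound $\exp[-\gO(\gd a)]$ is ``adequate against'' the count $|\A|\leq\exp[O(\gd a)]$ hides the whole point: the implicit constants are not comparable. The paper gets around this by switching from Theorem~\ref{Chern} to Theorem~\ref{uppertail}, applied to $|B\sm\XX|\sim{\rm Bin}(|B|,\eps)$ with $|B|=O(\gd a)$, yielding $\Pr(E_B)<\exp[-\gO(\eta\gd a\log(\eta/\eps))]$; since $\eps$ may be taken arbitrarily small, $\log(\eta/\eps)$ beats whatever constant sits in the $|\A|$ bound. This is precisely where the hypothesis $p>1-\eps$ with $\eps$ small is cashed in.

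Finally, you have the tree argument (Proposition~\ref{Knuth}) in the wrong place. It is not used to enumerate $T$ for (a), (b)---indeed $T$ is a sparse random sample of $A$ with no linkage guarantee of its own, so a tree enumeration of $T$ would not work directly. Items (a), (b) go through with the naive count $\exp[\bbb\log(eN/\bbb)]$; the point you are missing is that when $\gd$ is near its minimum $1/(3k)$, Proposition~\ref{KKProp} forces $a$ to be of order $N$, so $\log(eN/\bbb)=O(\log k)$ rather than $O(k)$, and the count exponent drops to $O(ak^{-3+\gz}\log^2 k)\ll a/k^2$. The tree argument \emph{is} the combinatorial heart of the proof, but it appears inside the proof of $|\A|<\exp[O(\gd a)]$ (your ``$A$ is determined given $\rrr$ up to $\exp[O(\gd a)]$ choices''), specifically to enumerate $A\sm S_\rrr$, which has size $O(\gd a k^{-\vt})$ by (T5) but is drawn from a universe we cannot bound well; this is where the $2$-linkedness of $A$ and Proposition~\ref{Knuth} are exploited.
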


\mn
Before proving this, we show that it supports
\eqref{toshow4}:
\begin{cor}\label{MTpf}
The collections $\A$ and $\{G_A:A\in \A\}$ are $(6\eta)$-nice.
\end{cor}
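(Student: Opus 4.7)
The plan is to write, for every $A\in\A$, the indicator $\mathbf{1}_A$ (and likewise $\mathbf{1}_{G_A}$) as a $\pm 1$-signed sum of indicators of sets falling into the $\eta$-nice collections catalogued in Lemma~\ref{ML}, and then to apply $\eta$-niceness and a union bound. With Lemma~\ref{ML} already in hand, what remains is pure set-theoretic bookkeeping; no serious additional obstacle is expected.

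For the $A$-side I would fix $\rrr=\rrr(A)$ and $A^{*}=A^{*}_{\rrr}$, use $S'_\rrr\sub S_\rrr$ (see \eqref{S'}) to write $\mathbf{1}_{S'_\rrr}=\mathbf{1}_{S_\rrr}-\mathbf{1}_{S_\rrr\sm S'_\rrr}$, and then iterate the elementary symmetric-difference identity $\mathbf{1}_{X}=\mathbf{1}_{Y}+\mathbf{1}_{X\sm Y}-\mathbf{1}_{Y\sm X}$ twice, first with $(X,Y)=(A^{*},S'_\rrr)$ and then with $(X,Y)=(A,A^{*})$. This produces
\[
\mathbf{1}_A=\mathbf{1}_{S_\rrr}-\mathbf{1}_{S_\rrr\sm S'_\rrr}-\mathbf{1}_{S'_\rrr\sm A^{*}}+\mathbf{1}_{A^{*}\sm S'_\rrr}+\mathbf{1}_{A\sm A^{*}}-\mathbf{1}_{A^{*}\sm A},
\]
whose six terms are instances of (b), (d), (e), (f), (h) and (i) of Lemma~\ref{ML} respectively; summing against $\mathbf{1}_{\XX}$ then expresses $|\XX\cap A|-|A|p$ as a signed sum of six deviations $|\XX\cap B|-|B|p$ from those collections.

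For the $G_A$-side I would invoke \eqref{WZA}, applied once to $A$ and once to $A^{*}_{\rrr}$, to obtain $Z_\rrr\sub G_A\cap G^{*}_\rrr\sub W_\rrr$, hence the disjoint decomposition $G_A=Z_\rrr\cup(G_A\cap(G^{*}_\rrr\sm Z_\rrr))\cup(G_A\sm G^{*}_\rrr)$ together with $Z_\rrr=W_\rrr\sm(W_\rrr\sm Z_\rrr)$. These combine into
\[
\mathbf{1}_{G_A}=\mathbf{1}_{W_\rrr}-\mathbf{1}_{W_\rrr\sm Z_\rrr}+\mathbf{1}_{G_A\cap(G^{*}_\rrr\sm Z_\rrr)}+\mathbf{1}_{G_A\sm G^{*}_\rrr},
\]
whose four summands are drawn from (a), (c), (k) and (j) of Lemma~\ref{ML}.

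To conclude, I would let $\eee$ be the union, over the ten collections $\B$ appearing in (a)--(f) and (h)--(k) of Lemma~\ref{ML}, of the bad events $\bigcup_{B\in\B}E_{B,\eta}$; by $\eta$-niceness and a union bound, $\Pr(\eee)<\exp[-\gO(ak^{-2})]$. On the complement of $\eee$, the triangle inequality applied to the two identities above gives $||\XX\cap A|-|A|p|<6\eta\gd ap$ and $||\XX\cap G_A|-|G_A|p|<4\eta\gd ap\le 6\eta\gd ap$ simultaneously for every $A\in\A$, which is exactly the $(6\eta)$-niceness of $\A$ and $\{G_A:A\in\A\}$.
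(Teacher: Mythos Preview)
Your proposal is correct and is essentially the paper's own argument, recast in the language of indicator functions rather than the paper's two ``trivial observations'' about how niceness parameters add under disjoint union and set difference. You even arrive at the same six-term decomposition of $A$ (via (b), (d), (e), (f), (h), (i)) and four-term decomposition of $G_A$ (via (a), (c), (k), (j), with $Z_\rrr=W_\rrr\sm(W_\rrr\sm Z_\rrr)$), and like the paper you do not need item (g); the only cosmetic point is that your strict ``$<$'' should be ``$\le$'', which is all that $(6\eta)$-niceness requires.
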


\nin
Of course this gives the relevant portion of \eqref{toshow4}, since $\Q(a,g)$ implies
that for some $A\in \A$ either
$|\XX\cap A|\geq |A|p+\gd a p/2$ or
$|\XX\cap G_A|\leq |G_A|p-\gd a p/2$
({\em cf.} \eqref{EXGA}), each of which, according to Corollary~\ref{MTpf},
occurs with probability $\exp[-\gO(ak^{-2})]$ (and---recall \eqref{alarge}---
\[
\sum_{a>a_0}\sum_{g\leq 2a}\exp[-\gO(ak^{-2})] =o(1/n)).
\]

\bn
{\em Proof of Corollary \ref{MTpf}.}
This is just a matter of building the relevant sets,
starting from the collections in Lemma \ref{ML}
and applying the (trivial) observations:

\mn
if $\{K_B:B\in \B\}$ is $\ga$-nice, $\{L_B:B\in \B\}$
is $\gb$-nice
and $K_B\cap L_B=\0 ~~\forall B\in \B$, then
$\{K_B\cup L_B:B\in \B\}$ is $(\ga+\gb)$-nice;

\mn
if $\{K_B:B\in \B\}$ is $\ga$-nice, $\{L_B:B\in \B\}$
is $\gb$-nice
and $K_B\supseteq L_B ~~\forall B\in \B$, then
$\{K_B\sm L_B:B\in \B\}$ is $(\ga+\gb)$-nice.

\mn
Using these (in combination with Lemma
\ref{ML}), we find that:

\[
\mbox{$\{Z_\rrr=
W_\rrr\sm (W_\rrr\sm Z_\rrr):\rrr\in \R\}$ is $(2\eta)$-nice;}
\]
\[
\mbox{$\{S'_\rrr=S_\rrr\sm (S_\rrr\sm S'_\rrr):\rrr\in \R\}$
is $(2\eta)$-nice;}
\]
\[
\mbox{$\{A^*_\rrr=(S'_\rrr\sm (S'_\rrr\sm A^*_\rrr))\cup
(A^*_\rrr\sm S'_\rrr):\rrr\in \R\}$ is $(4\eta)$-nice;}
\]
\[
\mbox{$\{A=
(A\sm A^*_{\rrr(A)})\cup (A^*_{\rrr(A)}\sm (A^*_{\rrr(A)}\sm A)):
A\in \A\} =\A$ is $(6\eta)$-nice;}
\]
\[
\mbox{$\{G_A= (G_A\sm G^*_{\rrr(A)})
\cup (G_A\cap (G^*_{\rrr(A)}\sm Z_{\rrr(A)}))\cup Z_{\rrr(A)}):
A\in \A\}$}~~~~~~~
\]
\[
~~~~~~~~~~~~~~~~~~~~~~~~~~~~~~~~~~~~~~~~~~~~~~~
\mbox{$=\{G_A:A\in \A\}$ is $(4\eta)$-nice.}
\]
(Note $Z_{\rrr(A)}$ is the same as $Z_{A}$ but seems slightly more natural here.)
\qed


\bn
{\em Proof of Lemma \ref{ML}.}
For the rest of this discussion we
write $E_B$ for $E_{B,\eta}$.
We want to show that
\eqref{etanice}
holds for each of the collections---say $\B$---appearing in (a)-(k).
This is all based on the union bound:
in each case we bound the size of the $\B$ in question
and show, using
what we know about the sizes of
members of $\B$, that $\mathbb{P}(E_B)$
is much smaller than $|\B|^{-1}$ for each $B\in \B$.

We are interested in bounding probabilities of the type
\[
\mathbb{P}(||\XX\cap B| -|B|p| > \eta \gd ap)
\]
using Theorems~\ref{Chern} and \ref{uppertail};
but, since $p=1-\eps\approx 1$, we can
do a little better by applying these theorems with
$\xi = |B\sm \XX|$
(which has the distribution ${\rm Bin}(|B|,\eps)$),
using the trivial observation
that, for any $\gl>0$ (always equal to
$\eta \gd ap$ in what follows),
\beq{XminusB}
\mathbb{P}(||\XX\cap B| -|B|p| > \gl)
=\mathbb{P}(||B\sm \XX| -|B|\eps| > \gl).
\enq
(For most of the argument this change will make little difference,
but it will be crucial
when we come to items (h)-(k).)

\mn
{\em
Items
(a) and (b)}.
To make things easier to read, set $\bbb=Kak^{-3+\gz}\log k$
(the bound in (T1) of Lemma \ref{Tlemma}).
The number of possibilities for each of $W_\rrr$, $S_\rrr$
is bounded by the number of possible $T\rr$'s, which
(by \eqref{Cnt}) is at most
\beq{expb}
\exp[\bbb\log (eN/\bbb)] < \exp[\bbb \log (Nk^3/a)]
\enq
(recall $N=\C{2k}{k}$).
On the other hand, (T4)
and the fact that
$|S_T|\leq 2(k+1)|W_T|/k$ (see the definition of $S_T$ in \eqref{ST})
imply that, for any $T$,
\[   
|W_T|, |S_T|< O(\gd a k^{\gz}\log k +g) =
O(\gd a k^{\gz}\log k +a),
\]   
so that Theorem \ref{Chern} gives (for any $T$)
\beq{XWT}
\max\{\mathbb{P}(E_{W_T}), \mathbb{P}(E_{S_T}) \}<
\exp[-\gO (\gd^2a/(\gd k^\gz\log k+1))].
\enq
(In a little more detail:
we apply Theorem \ref{Chern}
(using \eqref{XminusB}, though, as noted above,
this is not really needed here), with
$m = O(\gd a k^{\gz}\log k +a)$, $q=\eps$ and
$\gl = \eta \gd ap$,
to bound the left side of \eqref{XWT} by
$\exp[-\gO(\gl^2/\max\{m\eps,\gl\})]$,
and observe that
$\max\{m\eps,\gl\}
=O(\gd ak^\gz\log k+a)$.)

That the collections in (a) and (b) are $\eta$-nice now follows
upon multiplying the bounds in
\eqref{expb} and \eqref{XWT} and checking that \eqref{dxa''}
implies (with room to spare)
$
\bbb \log (Nk^3/a) =o(\gd^2a/(\gd k^\gz\log k+1)).
$

\mn
{\em Item (c)}.
Since each of $Z_\rrr$, $W\rr$ is determined by $T\rr$
and $F\rr$,
the number of possibilities for $W\rr\sm Z\rr$
is at most the product
of the bound in \eqref{expb}
(which will be negligible here) and the number of possibilities
for $F_\rrr$ given $T:=T_\rrr$.
The latter is at most the number of
subsets of $\nabla(N(T),\gG_k\sm T)$ of size less than
$\ccc:=K\gd ak^{-1+\gz}\log k$ (the bound in (T2)),
which, since
\beq{nablaTgG}
|\nabla(N(T),\gG_k\sm T)| \leq k^2|T|< Kak^{-1+\gz}\log k=:\ddd
\enq
(see (T1)), is less than
\begin{eqnarray}
\exp [\ccc \log (e\ddd/\ccc)]
&=&\exp[O(\gd ak^{-1+\gz}\log k\log(e/\gd))]\nonumber\\
&=&\exp[O(\gd ak^{-1+\gz}\log^2 k)].\label{ogdak}
\end{eqnarray}
(Here we again use \eqref{Cnt} (for the initial bound) and
\eqref{dxa''} (for the second line).
Strictly speaking, the application of \eqref{Cnt} is only justified
when $\gd\leq 1/2$;
but for larger $\gd$ we can bound the number of possibilities for $F_R$ by the trivial
$2^{\ddd}$,
which (for such $\gd$) is smaller than the left side of \eqref{ogdak}.)

On the other hand, again using (T2), we have
\[
|W_R\sm Z_R|\leq k|F_R| =
O(\gd a k^{\gz}\log k)
\]
(with the justification for the first inequality similar to that for \eqref{WTsm}).

\medskip
Thus Theorem~\ref{Chern} gives (for any $\rrr$)
\begin{eqnarray}
\mathbb{P}(E_{W\rr\sm Z\rr}) &<&
\exp[-\gO (\eta^2\gd^2a^2/(\gd a k^\gz\log k))]\nonumber\\
&=&\exp[-\gO (\eta^2\gd a/ (k^\gz\log k))],\label{22A}
\end{eqnarray}
which, combined with the (here insignificant)
bounds in \eqref{expb} and \eqref{ogdak}, gives
$$
\mbox{$\sum_\rrr\mathbb{P}(E_{W\rr\sm Z\rr})
=\exp[-\gO (\eta^2\gd a/ (k^\gz\log k))].$}
$$

\mn
{\em Items (d)-(g).}
For each of these the number of sets in question
is $|\R|$, the number of possibilities
for $(T\rr,F\rr,U\rr)$.
As already observed, the number of $(T\rr,F\rr)$'s is at
most the product of the bounds in \eqref{expb} and \eqref{ogdak}.
On the other hand, with
$\ccc= K\gd a k^{-1+\gz} \log^2 k$
(the bound on $|U|$ in (U1)) and
$\ddd =K\gd a k^{\gz}\log k$
(the bound on $|W_T\sm G_A|$ in (T4))---so
$\ccc$ and $\ddd$ have changed from what they were above---the
number of possibilities for $U_R$ given $T_R$ is at most
\beq{Ubd}
\exp [\ccc \log (e\ddd/\ccc)]
=\exp[O(\gd a k^{-1+\gz} \log^3 k)]
\enq
(which dominates the bounds from \eqref{expb} and \eqref{ogdak}).

We next need to bound the sizes of the various sets
under discussion.
We have

\beq{SAS'A}
|S_R\sm S'_R|\leq (k+1)\ccc =O(\gd ak^\gz\log^2k)
\enq
(again, since (U1) bounds $|U_R|$ by $\ccc$ (and $S_R\sm S_R'\sub N(U_R)$));
\beq{Srr'}
|S_\rrr'\sm A^*_\rrr| =O(\gd a)
\enq
(given by (U2), once we recall that $S_R'=S_R\sm N(U_R)$);
\beq{A*rr}
|A^*_\rrr\sm S'_\rrr| =O(\gd a k^{-\vt})
\enq
(using (T5) and the fact---see \eqref{S'}---that
$A_R^*\sm S_\rrr' =A_R^*\sm S_\rrr$); and, with $A_R^*=A$
(so $G_R^*=G_A$),
\beq{G*rr}
|G^*_\rrr\sm Z_\rrr|\leq |G_A^0\sm Z_\rrr|
+|G^*_\rrr\sm G_A^0|
= O(ak^{-2}+ \gd a) =O(\gd a)
\enq
(using (T3), Lemma \ref{G0lemma} and \eqref{dxa''}.)
Note, for use below, that
for {\em any} $A$ with $R(A)=R$,
\eqref{G*rr} remains true if we replace $G_R^*$ by $G_A$.
(Similarly \eqref{A*rr} holds with any such $A$ in place of $A_R^*$, but we
don't need this.)

The largest of the bounds in \eqref{SAS'A}-\eqref{G*rr}
is
the $O(\gd a k^\gz\log^2 k)$ in \eqref{SAS'A}; so for
each of the sets $B$ appearing in (d)-(g)
(i.e. $B = S_\rrr\sm S_\rrr'$ in (d) and so on),
we have
\begin{eqnarray}
\mathbb{P}(E_B) &<&
\exp[-\gO (\eta^2\gd^2a^2/(\gd ak^\gz\log^2 k)]\nonumber\\
&=&\exp[-\gO (\eta^2\gd a/ (k^\gz\log^2 k))]\label{EQ};
\end{eqnarray}
and, since $\eta^2\gd a/ (k^\gz\log^2 k)$
in \eqref{EQ} is much larger than
the exponent in \eqref{Ubd}, it follows that the
collections in (d)-(g) are $\eta$-nice.

\mn
{\em Items (h)-(k).}
Here we first dispose of the sizes of the individual sets,
before turning to
the more interesting problem of
bounding the sizes of the collections
in question.

For (h) and (i), notice that for any $A,A'\in \A$ with
$R(A)=R(A')$ we have
$$
|A\sm A'|\leq |A\cap (S'_\rrr\sm A')| +
|A\sm S'_\rrr| = O(\gd a + \gd a k^{-\vt})
=O(\gd a)
$$
(using (U2) and (T5), as earlier in
\eqref{Srr'} and \eqref{A*rr}); in particular this bounds
the sizes of the sets in (h), (i)
(namely $|A\sm A^*_\rrr|$ and
$|A^*_\rrr\sm A|$, where $R=R(A)$) by $O(\gd a)$.
For (j) and (k), a similar bound---that is,
\[
\max\{|G_A\sm G_\rrr^*|, |G_A\cap  (G^*_{\rrr}\sm Z_{\rrr})|\} =O(\gd a)
\]
for $A$ with $R(A)=R$---follows from
$G_R^*\supseteq Z_R$ (see
\eqref{WZA}), $|G_A\sm Z_R|< O(\gd a)$ (noted following \eqref{G*rr})
and \eqref{G*rr} itself.

\medskip
We now turn to the sizes of the collections in (h)-(k),
each of which is at most $|\A|$.  We will show
\beq{|A|}
|\A|<\exp[O(\gd a)].
\enq
Before doing so we observe that this is enough
to show that the collections in (h)-(k) are $\eta$-nice;
namely, for $B$ belonging to any of these collections
(so $|B|=O(\gd a)$) and small enough $\eps$,
Theorem \ref{uppertail} (applied with
$m=O(\gd a)$ and $q=\eps $---and now really using \eqref{XminusB}---gives
\beq{22B}
\mathbb{P}(E_B) < \exp[-\eta \gd a(\log (1/\eps)-O(1))].
\enq
(Here
$|B\sm \XX|< |B|\eps -\eta\gd ap$ is impossible,
so we are just using
\[
\mathbb{P}(|B\sm \XX|> |B|\eps +\eta \gd ap) <
\mathbb{P}(|B\sm \XX|> \eta \gd ap) <
\exp[-\eta \gd a p\log (\tfrac{\eta \gd ap }{em\eps})].)
\]
Assertions (h)-(k) then follow on multiplying the bounds in \eqref{|A|}
and \eqref{22B}.

\bn
{\em Proof of \eqref{|A|}.}
According to
\eqref{Adet}, we may bound $|\A|$
by the number of possibilities for the pair
$(A\sm A_\rrr^*,G_A\cap G_\rrr^*)$
(with $R_A=R$), so by
our earlier bound
on $|\R|$---essentially that in \eqref{Ubd}; see the discussion
of items (d)-(g)---multiplied by the number of
possibilities for $(A\sm A_\rrr^*,G_A\cap G_\rrr^*)$ given $\rrr$.
So it is enough to show that, once we know $R$---and therefore
$A_R^*$ and $G_R^*$---the number of choices for each of
$A\sm A_\rrr^*$, $G_A\cap G_\rrr^*$
is less than $\exp[O(\gd a)]$.

The second of these is easy:
since (by \eqref{WZA})
each of $G_A,G_R^*$ contains $Z_R$
(which is determined by $R$), the number of possibilities
for $G\cap G_\rrr^*$ given $R$ (and therefore $G_R^*$)
is at most
$\exp_2[|G_R^*\sm Z_R|]$,
and we have already seen in \eqref{G*rr} that
$|G_\rrr^*\sm Z_\rrr|=O(\gd a)$.

The case of
$A\sm A_\rrr^*$ is more interesting.
Here we decompose
\[
A\sm A_\rrr^* = (A\cap (S'_\rrr\sm A^*_\rrr))
\cup (A\sm (S_\rrr'\cup A^*_\rrr))
\]
and consider the two terms on the right-hand side
separately.
The number of possibilities for the first term is
at most $\exp_2[|S_R'\sm A_R^*|]$
(again, given $R$, which determines $S_R'$ and $A_R^*$),
while (U2) (or \eqref{Srr'}) gives $|S_R'\sm A_R^*|=O(\gd a)$.

So it is enough to show that the number of possibilities
for $A\sm (S_\rrr'\cup A^*_\rrr)$ is $\exp[O(\gd a)]$
(it will actually be much smaller).
In fact, it is enough
to prove such a bound on
the number of possibilities for
$A\sm S_\rrr'$, which
determines $A\sm (S_\rrr'\cup A^*_\rrr)$
since we know $A^*_\rrr$.
Here we recall that
\eqref{S'} gives
$A\sm S_\rrr'=A\sm S_\rrr$ (so we may use these interchangeably,
and similarly for $A\cap S_\rrr'=A\cap S_\rrr$), and that---crucially---(T5) gives
\beq{AsmSR}
|A\sm S_R|=O(\gd a k^{-\vt}).
\enq

Note that this final point differs from its
earlier counterparts in that we now have less
control over the size of the universe from
which the set in question (i.e. $A\sm S_R$) is being drawn
(in contrast to, for example, $F_R$ in (c), which was
drawn from
$\nabla(N(T),\gG_k\sm T)$,
whose size was bounded in \eqref{nablaTgG},
or, in the present case, $A\cap (S_R'\sm A_R^*)$, which is
drawn from the quite small $S_R'\sm A_R^*$).
Thus, for example, if we try to apply \eqref{Cnt} with
$\aaa$ the bound in \eqref{AsmSR} and $\bbb =N$
($=\C{2k}{k}$), then
we can only say that
the number of possibilities
for $A\sm S_R$ is less than
$\exp[O(\gd a k^{-\vt})\log (eN/\gd a k^{-\vt})]$,
which for somewhat small $a$ may be far larger than
the desired $\exp[O(\gd a)]$.
This little difficulty will be handled by
Proposition \ref{Knuth}.

\medskip
Write $\ttt$ ($=O(\gd a k^{-\vt})$)
for the bound on
$| A\sm S_\rrr|$ given in \eqref{AsmSR}.
Denote by $\gL$ the (``Johnson") graph on $\gG_k$ in which
two vertices (a.k.a. $k$-sets)
are adjacent if they are
at distance 2 in $\gS$, and set $d=k^2$ (so
$\gL$ is $d$-regular).
Since our $A$'s induce
connected subgraphs of $\gL$
(another way of saying they are 2-linked),
there is, for each $A$ under discussion, a rooted
forest
with roots in $S_R\cap A = S'_R\cap A$,
set of (at most $t$) non-roots equal to $A\sm S_R$,
and at least one non-root in each component;
thus we just need to bound the
number of such forests.

(Note that existence of said forest requires $S_R\cap A\neq\0$, which, since we assume $\gd$ is not too large,
holds because
the bound in (T5) is less than $a$.
If $S_R\cap A=\0$, as is possible for large $\gd$, we may bound the number
of choices for $A\sm S_R=A$ by the number of {\em trees} of size up to $t$,
but this count should include a factor $N$ (in place of the bound for (ii) below)
for the choice of a root---a change that can cause trouble in the present
regime, but not for large $\gd$, where, as will appear below, our probability bounds improve.)

For the desired bound we may think of specifying a forest as above
by specifying:

\mn
(i)  the number, say $q\leq t$, of roots;

\mn
(ii)  the set of roots, $\{x_1\dots x_q\}\sub S_R'\cap A$;

\mn
(iii)  for each $i\in [q]$, the size, say $\ga_i$,
of the component (tree) rooted at $x_i$; and

\mn
(iv)  the components themselves.

\mn

\mn
We may bound the numbers of possibilities in (ii), (iii) and
(iv) by
$\C{a+O(\gd a)}{q}$,
$\C{t}{q}$ and $(ed)^t$ respectively.  The first of these derives
from (U2), according to which we have $|S_R'|<a +O(\gd a)$;
the second is the number of sequences $(\ga_1\dots \ga_q)$
of positive integers summing to at most $t$;
and the third is given by Proposition \ref{Knuth}.
Thus (recalling from \eqref{alarge} that $a$ is not very small),
we find that the number of forests as above is at most
\beq{lastcount}
\mbox{$\sum_{q\leq t}
\Cc{a+O(\gd a)}{q}\Cc{t}{q}(ed)^t =
\exp[\Theta(t\log k)] ~~~(=\exp[O(\gd a)]$).}
\enq

\qed

Finally we turn to the case of large $\gd$ ($\gd > 1$),
showing (for any $a,g$, with $\gd = (g-a)/a> 1$)
\beq{toshowlast}
\mathbb{P}(\Q(a,g)) < \eps^{g/3},
\enq
which, with the trivial $g\geq k$, bounds the contribution to
\eqref{toshow4} of the terms under discussion by
\[
\sum_{g\geq k}\sum_{a< g}\eps^{g/3}=o(1/n).
\]

\medskip
For \eqref{toshowlast}, first notice that in the present situation Theorem~\ref{uppertail}
bounds the probability of
\eqref{XGA} (for a given $A\in \A(a,g)$)
by
\beq{2eeps}
\mathbb{P}(|G_A\sm \XX|> g/2) < (2e\eps)^{g/2}.
\enq

On the other hand, to bound the number of possibilities for $A$
(i.e. the size of $\A(a,g)$), we may think of specifying $A$ {\em via}
the following steps.

\mn
(i)  Choose, for an appropriate fixed $C$, $T\sub G:=G_A$ of size $(C\pm o(1))(g/k)\log k$ such that,
with
\[
S=S_T=\{x\in \gG_k:d_T(x)> (C/2)\log k\},
\]
we have
\beq{AminusS}
|A\sm S|< k^{-2}a
\enq
and, with
$Z=Z(G) = \{x\in \gG_k: d_G(x)\geq k/4\}$,
\beq{Ssize}
|S\sm Z| < k^{-1}g.
\enq
(The proof of the existence of such a $T$ is similar to---easier than---the proof of
Lemma~\ref{Tlemma}, and we omit the details, just noting that, since $S\sub N(G)$,
fewer than $gk$ vertices are candidates for $S\sm Z$.)

Notice that by \eqref{Ssize} (and the definition of $Z$), we have
\beq{Ssize'}
|S|\leq (4/k)g(k+1) + g/k    =O(g).
\enq

\mn
(ii)  For each $x\in A\sm N(T)$ ($\sub A\sm S$), choose some neighbor of $x$
(necessarily in $G$) and
let $T'$ be the collection of these neighbors;
thus $T'\cap T=\0$ and
$|T'|\leq |A\sm S|< k^{-2}a$ (by \eqref{AminusS}).
Notice also that $T\cup T'$ is 4-linked (by Proposition~\ref{Plink}
and the fact that $A$ is 2-linked).

\mn
(iii)  Finally, choose $A$ from $S\cup N(T')$.

\medskip
We should then bound the number of ways in which these steps can be carried out:

\mn
(i)  Since $T\cup T'$ is 4-linked, Proposition~\ref{Knuth}
(applied to the graph on $\gG_k$ in which adjacency is $\gS$-distance at most 4,
so a $d$-regular graph for some $d<k^4$) bounds the number of choices for
$T\cup T'$ by
\[
M\exp[O( (g/k)\log k)\log d]<M\exp[O (g/k)\log^2 k]
\]
(where the $M$ ($=|\gG_{k+1}|$) corresponds to choosing a root in $T\cup T'$).

\mn
(ii)  The number of choices for $T'$ given $T\cup T'$ is
$\exp[O(k^{-2}a \log (gk/a)]$.
Note that once we know $T\cup T'$ and $ T'$, we also know $T$ and
thus $S$.

\mn
(iii)  Given $T'$ and $S$, there are at most $\exp[a( \log (g/a)+O(1))]$ choices for $A\sub S\cup N(T')$
(since $|S\cup N(T')| <O( g)$; see \eqref{Ssize'} and the specification of $|T|$ in (i)).

\medskip
Of course for sufficiently (not very) small $\eps$,
all of these bounds are dominated by the one in
\eqref{2eeps}, so we have \eqref{toshowlast}.
\qed

\section{Sperner}

As one might expect, there has also been some consideration of
Sperner's Theorem \cite{Sperner}---usually considered the first result in
extremal set theory---from the sparse random viewpoint.
Here $\XX$ is the random subset of $2^{[n]}$ in which each $A\sub [n]$ is present
with probability $p$ (independent of other choices), and
one is interested in the size of a largest antichain
(collection of pairwise incomparable sets)
in $\XX$ (called the {\em width} of $\XX$ and denoted $w(\XX)$; see
\cite{Bollobas} for general background and
\cite{BMT} for a review of work related to the present question).
In particular, proving a conjecture of Osthus \cite{Osthus},
it is shown in \cite{BMT} and \cite{CM}
(both using the ``container" technology of
\cite{BMS,ST}) that
$
w(\XX)\sim \Cc{n}{\lfloor n/2\rfloor}p
$
w.h.p. provided $p>C/n$ for a suitable fixed $C$.

\medskip
Here again it is natural to ask for a more literal counterpart of Sperner's Theorem,
namely, for the property
\beq{wwXX}
w(\XX) = \max\{|\XX\cap \gG_{\lfloor n/2\rfloor}|,|\XX\cap \gG_{\lceil n/2\rceil}|\}
\enq
(where we again take $\gG_\ell =\C{[n]}{\ell}$).
As for the Erd\H{o}s-Ko-Rado question considered above (and for similar reasons),
it is easy to see that \eqref{wwXX} is unlikely for $p$ less than about 3/4
(and easy to guess that it {\em is} likely above this).
Here we just observe that the method of Section~\ref{Proof}
at least gives the weaker statement analogous to our Theorem~\ref{MT}:
\begin{thm}\label{Sthm}
There is a fixed $\eps>0$ such that \eqref{wwXX} holds w.h.p. provided $p>1-\eps$.
\end{thm}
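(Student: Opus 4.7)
The plan is to imitate the argument of Sections~\ref{Setting}--\ref{Proof}, with maximal antichains in $2^{[n]}$ playing the role of nonprincipal maximal intersecting families in $\C{[n]}{k}$ and the middle layers playing the role of stars. For concreteness take $n=2k+1$; the case $n=2k$ is analogous and slightly cleaner. Let $\m$ be the collection of maximal antichains $\f$ in $2^{[n]}$ with $\f\neq\gG_k,\gG_{k+1}$; it suffices to show that w.h.p.\ every $\f\in\m$ satisfies $|\XX\cap\f|<\max_{\ell\in\{k,k+1\}}|\XX\cap\gG_\ell|$. For $\f\in\m$ and $\ell\in\{k,k+1\}$ set $A_\ell(\f)=\f\sm\gG_\ell$ and $J_\ell(\f)=\gG_\ell\sm\f$, so that $|\XX\cap\f|-|\XX\cap\gG_\ell|=|\XX\cap A_\ell|-|\XX\cap J_\ell|$, in direct analogy with \eqref{AJHF}.

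A Chernoff bound on the single random variable $|\XX\cap\gG_\ell|$ gives, w.h.p., $|\XX\cap\gG_\ell|\geq (p-o(1))\Cc{n}{k}$ for $\ell\in\{k,k+1\}$. On this good event, every $\f$ whose size is bounded away from $\Cc{n}{k}$ (say $|\f|<(p-\eps_0)\Cc{n}{k}$ for a small fixed $\eps_0$) is handled by the trivial $|\XX\cap\f|\leq|\f|$, with no union bound over $\f$ needed. So we only have to deal with $\f\in\m$ of near-extremal size. For such $\f$ the reduction of Section~\ref{Setting} adapts: decomposing $A_\ell$ into $2$-linked components in the containment bigraph $\gS$ on $\gG_k\cup\gG_{k+1}$ and keeping the component that maximizes the relevant $\XX$-comparison, one may assume $A_\ell$ is $2$-linked, and maximality of $\f$ forces $J_\ell$ to be (essentially) the comparability-shadow of $A_\ell$ into $\gG_\ell$; the pair $(A_\ell,J_\ell)$ then plays the role of the pair $(A,G_A^c)$ of Section~\ref{Proof}, and Proposition~\ref{KKProp}, suitably adapted to the $n=2k+1$ bigraph, gives the isoperimetric lower bound $\gd:=(|J_\ell|-|A_\ell|)/|A_\ell|\geq\gO(1/k)$ that replaces \eqref{dxa'}.

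With these inputs in place, the core argument---Lemmas~\ref{Tlemma},~\ref{Ulemma},~\ref{ML} and Corollary~\ref{MTpf}---should transfer essentially verbatim: for each pair $(a,g)$ of sizes one builds the primary approximations $(T,F,U)$ and secondary sets $W_T,S_T,Z_{A,T},S'$ via $\gS$, verifies the relevant $\eta$-niceness statements by the same Chernoff, upper-tail, and asymptotic-enumeration estimates, and sums over $(a,g)$ to obtain the analogue of \eqref{toshow4}.

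The main obstacle---essentially the only genuinely new point---is that $A_\ell$ is a subset of $2^{[n]}\sm\gG_\ell$ rather than of a single layer adjacent to $\gG_\ell$ in $\gS$; members of $A_\ell$ may lie in layers $\gG_{k-2}$, $\gG_{k+3}$, etc., and contribute to $J_\ell$ through longer-range comparabilities. A LYM-type calculation shows that for $\f$ of near-extremal size this far-layer mass is fairly tightly controlled, and I would handle it either by restricting the Section~\ref{Proof} machinery to the ``close'' part $A_\ell\cap(\gG_{k-1}\cup\gG_{k+2})$ and treating the tails by a separate direct union bound (using \eqref{Cnt} and Theorem~\ref{uppertail} on the much smaller far layers), or by extending the bigraph $\gS$ to a small neighbourhood of the middle levels and reproving the isoperimetric input there. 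In either case the probability estimates themselves remain as in Section~\ref{Proof}, and the bookkeeping---though a little fussy---does not require new ideas.
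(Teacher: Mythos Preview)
Your overall strategy—reuse the Section~\ref{Proof} machinery, with antichains replacing intersecting families—is the right one, but the paper organizes the reduction differently and thereby sidesteps exactly the obstacle you flag as ``the only genuinely new point.''

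Rather than comparing a maximal antichain $\f$ directly to a middle layer (which, as you note, forces $A_\ell=\f\sm\gG_\ell$ to live in many layers at once), the paper proves a \emph{single-layer} statement: w.h.p.\ \eqref{X1} holds, namely $|\XX\cap\nabla(A)|>|\XX\cap A|$ for \emph{every} closed $A\sub\gG_i$ with $i<\lfloor n/2\rfloor$ (or $i=k$, $n=2k+1$, $|A|\le\tfrac12\Cc{n}{k}$), together with the symmetric lower-shadow statement \eqref{X2}. Each instance involves only the bigraph between $\gG_i$ and $\gG_{i+1}$, so the Section~\ref{Proof} argument transfers essentially verbatim; the needed isoperimetric input is just Kruskal--Katona for the relevant pair of layers (giving $\gd\geq\gO(1/k)$ directly, with no analogue of the Frankl step from Section~\ref{Setting} required).

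Deducing \eqref{wwXX} from \eqref{X1}--\eqref{X2} is then a completely elementary ``push-to-the-middle'' argument: if a largest antichain of $\XX$ has nonempty intersection with some $\gG_i$, $i<\lfloor n/2\rfloor$, replace that part by $\XX\cap\nabla$ of it to get a strictly larger antichain, and symmetrically from above. For $n=2k+1$ this leaves an antichain in $\gG_k\cup\gG_{k+1}$; passing to closures, one of the two pieces has size at most $\tfrac12\Cc{n}{k}$, and one more application of \eqref{X1} or \eqref{X2} finishes. No union bound over $\f$, no LYM control of far layers, and no extension of $\gS$ beyond adjacent levels is needed.

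In short: your ``main obstacle'' is self-inflicted. By proving the layerwise inequalities \eqref{X1}--\eqref{X2} first and pushing afterwards, the paper never has to handle a multi-layer $A_\ell$, and the isoperimetry you need (your $\gd\geq\gO(1/k)$ for $(|J_\ell|-|A_\ell|)/|A_\ell|$, which is not obviously available for arbitrary near-extremal $\f$) is replaced by the standard Kruskal--Katona bound between consecutive levels.
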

\nin
(Though not in print as far as we know, this seems to have been
of some interest; the present authors first heard the question in a lecture of
J. Balogh \cite{Balogh}.)

We just indicate how this goes.
The main point is that the argument of Section~\ref{Proof} is easily adapted to show that
(for $\eps,p$ as in Theorem~\ref{Sthm}) w.h.p.
\beq{X1}
|\XX\cap \partial_u(A)|> |\XX\cap A|
\enq
whenever $A\sub \gG_i$ is closed and nonempty and
either $i< \lfloor n/2\rfloor$ or $n=2k+1$, $i=k$
and $|A|\leq \frac{1}{2}\C{n}{k}$,
and (with $\partial_l$ denoting lower shadow)
\beq{X2}
|\XX\cap \partial_l(B)|> |\XX\cap B|
\enq
whenever $B\sub \gG_i$
is closed and nonempty and
either $i> \lceil n/2\rceil$ or $n=2k+1$, $i=k+1$
and $|B|\leq \frac{1}{2}\C{n}{k}$.
(Note \eqref{X1} and \eqref{X2} hold in the specified regimes
provided they do so when $A$ and $B$ are
2-linked.)

\mn
[We have preferred not to extend the
material of Section~\ref{Proof} to cover the present situation,
feeling that the extra generality would make the argument
even harder to follow than it already is.
It should at least be intuitively clear that \eqref{X1} and \eqref{X2}
are in fact less delicate than what's gone before; e.g.
\eqref{X1} gets easier as $i$ shrinks
(with $n$ fixed;
so the hardest case would be $i=n/2$, which corresponds to what we did
earlier and does not even appear here).

For the warier reader we may also argue as follows (for \eqref{X1} say).
Given $i<n/2$, identify $2^{[n]}$ in the natural way with
$\{B\sub [n]\cup J:B\supseteq J\}$, where $J$ is some $(n-2i)$-set
disjoint from
$[n]$.  Our $\gG_i$ then becomes a subset of $\C{[n]\cup J}{k}$,
where $k=n-i = |[n]\cup J|/2$, and the results of Section~\ref{Proof} apply directly.
We will not elaborate, apart from noting that (i) in this case
the lower bound on
$\gd$ in \eqref{dxa''} is automatic, and
(ii) the need to sum failure probabilities
over possible values of $i$ causes no trouble since the
bounds on these probabilities (essentially those in
\eqref{XWT},\eqref{22A},\eqref{EQ},\eqref{22B},\eqref{2eeps})
are so small.]

\medskip
It remains to observe that \eqref{X1} and \eqref{X2} (for the stated ranges)
imply \eqref{wwXX}.
(They actually imply that
$\XX\cap \gG_{\lfloor n/2\rfloor}$ and $\XX\cap \gG_{\lceil n/2\rceil}$ are
the only possible
antichains of size $w(\XX)$;
so if $n$ is odd, then w.h.p.
one of these is the {\em unique} largest antichain,
since their sizes differ w.h.p.)

For $n$ even the implication is immediate:
if $\cup A_i$ is an antichain of $\XX$ with $A_i\sub \gG_i$
and $i=\min\{j:A_j\neq \0\}< n/2$, then replacing $A_i$
by $\XX\cap \partial_u (A_i)$ gives a larger antichain,
and similarly if $A_i\neq\0$ for some $i>n/2$.

When $n=2k+1$ the same argument shows that any largest
antichain of $\XX$ is $C\cup D$ with
$C\sub \gG_k$ and $D\sub \gG_{k+1}$.
But then the union of the closures, say $A$ and $B$, of $C$ and $D$
is an antichain
of $2^{[n]}$, so
$\min\{|A|,|B|\}\leq \frac{1}{2}\C{n}{k}$;
and if
(e.g.) this minimum is $|A|>0$, then, according to \eqref{X1},
replacing $C$ by
$\XX\cap \partial_u(A)$ ($=\XX\cap \partial_u(C)$) increases the size of our antichain,
a contradiction.\qed

\bn
Department of Mathematics\\
Rutgers University\\
Piscataway NJ 08854\\
hammac3@math.rutgers.edu\\
jkahn@math.rutgers.edu

\end{document}